\newtheorem{theorem}{Theorem}[section]
\newtheorem{corollary}[theorem]{Corollary}
\newtheorem{proposition}[theorem]{Proposition}
\newtheorem{lemma}[theorem]{Lemma}
\theoremstyle{remark}
\newtheorem{remark}[theorem]{Remark}
\theoremstyle{definition}
\numberwithin{equation}{section}
\newcommand{\lie}[1]{\mathfrak{#1}}
\newcommand{\Nm}[1]{\left\|{#1}\right\|}
\newcommand{\NM}[1]{\left|\!\left|\!\left|{#1}\right|\!\right|\!\right|}
\newcommand{\Cent}{Z{}}
\newcommand{\GL}{\mathrm{GL}{}}
\renewcommand{\d}{{\mathbf{d}}}
\newcommand{\C}{{\mathbf{C}}}
\newcommand{\Id}{\mathrm{Id}}
\newcommand{\Hom}{\mathrm{Hom}}
\newcommand{\End}{\mathrm{End}}
\newcommand{\tens}{\otimes}
\newcommand{\R}{\mathbf{R}}
\newcommand{\Q}{\mathbf{Q}}
\newcommand{\N}{\mathbf{N}}
\newcommand{\Tr}{\mathbf{Tr}}
\newcommand{\Ad}{\mathrm{Ad}}
\newcommand{\ad}{\mathrm{ad}}
\newcommand{\cf}{cf.}
\newcommand{\abs}[1]{\mathopen|#1\mathclose|}
\newcommand{\sur}{/}
\newcommand{\infa}{<}
\newcommand{\url}[1]{\texttt{#1}}
\newcommand{\Zar}{\mathrm{Z}}
\newcommand{\Zcl}{\mathrm{Zcl}}
\begin{document}

\title[Linear actions of reductive groups and homogenous dynamics]{Geometric results on linear 
actions of reductive Lie groups for applications to homogeneous dynamics}
\author[R. Richard]{Rodolphe Richard}
\address{University College London, London - WC1E 6BT, U.K.}
\email{Rodolphe.RICHARD@Normalesup.org}

\author[N. Shah]{Nimish A. Shah}
\address{The Ohio State University, Columbus, OH 43210, U.S.A.}
\thanks{N.S. acknowledges support of NSF grants.}
\email{shah@math.ohio-state.edu}

\subjclass[2010]{20E40,37A17}

\keywords{Reductive Lie groups, Unipotent flows, Linearization technique, Counting integral on 
homogeneous varieties}

\begin{abstract}
Several problems in number theory when reformulated in terms of homogenous dynamics 
involve study of limiting distributions of translates of algebraically defined measures on orbits of 
reductive groups. The general non-divergence and linearization techniques, in view of Ratner's 
measure classification for unipotent flows, reduce such problems to dynamical questions about 
linear actions of reductive groups on finite dimensional vectors spaces. This article provides 
general results which resolve these linear dynamical questions in terms of natural group theoretic or geometric conditions. 
\end{abstract}

\maketitle

\section{Introduction}
Many questions in number theory that involve two interacting groups of symmetries contained in 
a Lie group can be addressed using methods of homogeneous dynamics. One such class of 
problems was proposed by Duke, Rudnick and Sarnak~\cite{DRS}, where one wants to study the 
density of integer points on affine varieties defined over $\Q$ which admit transitive action of a 
semisimple Lie group. In this case one symmetry group is an arithmetic lattice which preserves 
the integer points and the other being a reductive group, which is the stabilizer of a rational point 
in the variety. In such a situation, due to a finiteness result of Borel and Harish-Chandra, the 
question reduces to considering density of points on discrete orbits of lattices in semisimple Lie 
groups with reductive stabilizers. The approach suggested by \cite{DRS} involves understanding 
the limit distributions of translates of closed orbits of reductive subgroups on finite volume (or 
periodic)  homogeneous spaces of semisimple groups by various sequences of elements of the 
semisimple group. Finding the precise algebraic relation between the translating sequences, the 
reductive subgroup, and the limiting distributions is a key to this method.

In \cite{DRS} it was shown that if $H$ is a symmetric subgroup of a semisimple Lie groups $G$ 
with an irreducible lattice $\Gamma$ which also intersects $H$ in a lattice, then for any 
sequence $y_{n}\to\infty$ in $G$ modulo $H$, one has that the sequence $y_{n}\mu_{H}$ gets 
equidistributed in $G/\Gamma$ (with respect to a $G$-invariant probability measure), where $
\mu_{H}$ denotes the $H$-invariant probability measure on 
$H/H\cap\Gamma\hookrightarrow G/\Gamma$. 
The result was proved using deep results on spectral theory of $L^{2}(G/\Gamma)$. 
Later the same conclusion was obtained by Eskin and McMullen~\cite{EM} using the mixing 
property of certain subgroup actions on homogeneous spaces. 

In the general case, when $H$ is not a symmetric subgroup, the limit distributions of the 
sequences $y_{n}\mu_{H}$ depend on algebraic or geometric relations between $H$ and the 
sequence $y_{n}$. This relation was analyzed in fairly general situation by Eskin, Mozes and 
Shah~\cite{EMSGAFA,EMS}, where a conditional answer to a question in \cite{DRS} on the 
density of integer points was obtained. The main steps of this technique are as follows: (1) find 
the condition under which the sequence $\{y_{n}\mu_{H}\}$ is pre-compact in the space of 
probability measures on $G/\Gamma$ with respect to the weak$^{\ast}$ topology; (2) showing 
that any of the accumulation points, which are colloquially referred to as limiting measures, of 
this sequence of measures is invariant under a nontrivial unipotent subgroup; (3) apply Ratner's 
Theorem~\cite{R-measure} classifying ergodic invariant measures of unipotent flows to analyze 
the limiting measures using the linearization technique developed in 
\cite{DS,DM-MathAnn,Shah-uniform,DM-Limit}. Due to Ratner's measure classification theorem, 
this type of linearization technique reduces all the three steps into geometric questions about linear representations. The purpose of this article is to develop new techniques 
and obtain results to answer these linear dynamical questions. These results are fundamental for further developments on the above program. For example, 
Richard and Zamojski~\cite{focusing}) have vastly generalized and enhanced the core theorems of 
\cite{EMSGAFA,EMS} using the main results of this article for non-arithmetic situations and also for 
$S$-arithmetic situations. 

\subsection{Terminological conventions}\label{conventions}
All Lie groups are assumed to be \emph{finite dimensional real Lie groups}. By a connected 
\emph{reductive subgroup $H$ in a semisimple Lie group $G$}, we mean a closed and 
connected subgroup whose Lie algebra $\lie{h}$ is a \emph{reductive subalgebra} in the Lie 
algebra $\lie{g}$ of $G$, according to \cite[\S6 N$^{0}$6~Def.~5]{B}. Namely we ask for the 
adjoint action of $\lie{h}$ on $\lie{g}$ to be semisimple. We record three equivalent criteria 
for $H$ to be reductive in $G$: 
\renewcommand{\labelenumi}{(\alph{enumi})}
\begin{enumerate}
 \item the radical of $H$ does not contain non-trivial unipotent elements of $G$ 
 (\cf~\cite[\S6 N$^{0}$5~Theorem~4]{B}) and hence $H$, having central radical, is reductive;
 \item every representation of $H$ induced by a finite dimensional linear representation of $G$ is  
 semisimple~\cite[\S6 N$^{0}$6~Corollary~1 and \S6 N$^{0}$2~Theorem~2]{B};
 \item $H$~is stable under at least one global Cartan involution of $G$ (see \cite{Mostowherm} 
and section~\ref{Cartan}). 
\end{enumerate}

\subsection{Statements of the main results}\label{application}
In this article we prove the following: 

\begin{theorem}[Linearized Non-divergence] \label{lemma}
 Let $G$ be a connected semi\-simple real Lie group and let $H$ be a connected 
\emph{reductive subgroup in $G$}. 
 Let $\Cent_G(H)$ denote the centralizer of $H$ inside $G$ and $\Cent_G(H)^0$ its identity 
component.
 
  Then there exists a closed subset $Y$ of $G$ such that
 \begin{enumerate}
  \item[1)] 
  on the one hand we have 
  $$G=Y\cdot \Cent_G(H)^0,$$
  \item[2)] 
  on the other hand, given 
    \renewcommand{\labelenumii}{(\roman{enumii})}
  \begin{enumerate}
  \item a subset $\Omega$ of $H$ with nonempty interior,
  \item a finite dimensional linear representation $\rho:G\to \GL(V)$,
  \item and a norm $\Nm{-}$ on $V$,
  \end{enumerate}
   there exists a constant $c>0$ such that
   \begin{equation}\label{ineqlemma}
    \forall y\in Y,\forall v \in V, \sup_{\omega\in\Omega} 
    \Nm{\rho(y \cdot \omega)(v)} \geq c\cdot\Nm{v}.
   \end{equation}
 \end{enumerate}
\end{theorem}

\renewcommand{\labelenumi}{\arabic{enumi}.} 

This result generalizes \cite[Proposition~4.4]{EMSGAFA}, where $H$ is assumed to be an 
algebraic torus. The Theorem~\ref{lemma} will be formulated for ``reductive Lie groups'' $G$ and 
$H$ in \S\ref{general}. 

The next result complements the above theorem to provide a more complete picture. 

\begin{theorem}[Linearized Focusing] \label{theofocusing}
We consider the setup of Theorem~\ref{lemma}. Let $Y$ be a subset of $G$ satisfying the 
conclusion~2) of Theorem~\ref{lemma}.

\begin{enumerate}
\item[3)] Then given
\begin{enumerate}\renewcommand{\labelenumii}{(\roman{enumii})}
  \item a sequence $(y_n)_{n\in\N}$ in $Y$,
  \item a bounded subset $\Omega$ of $H$ with nonempty interior,
  \item a finite dimensional linear representation $\rho:G\to \GL(V)$,
  \item a vector $v$ of $V$,
\end{enumerate}
we have the equivalence between the following properties
\begin{enumerate}\renewcommand{\labelenumii}{(\Alph{enumii})}
\item the sequence $(y_n \Omega v)_{n\in\N}$ is uniformly bounded in $V$, \label{Vborne}
\item the sequence $(y_n)_{n\in\N}$ is  bounded in $G$ modulo the point-wise stabilizer of 
$\Omega v$. \label{Gborne}
\end{enumerate}
\end{enumerate}
\end{theorem}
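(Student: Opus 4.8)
The plan is to prove the two implications separately, noting first that one direction is soft. The implication (B) $\Rightarrow$ (A) is essentially immediate: if $F$ denotes the pointwise fixator of the vector $\Omega v$ (really of the finite set, or compact set, of vectors $\{\omega v : \omega \in \Omega\}$), then by hypothesis $y_n = g_n f_n$ with $(g_n)$ ranging in a bounded subset $K$ of $G$ and $f_n \in F$. Since $f_n$ fixes every $\omega v$, we get $y_n \omega v = \rho(g_n)(\omega v)$, and $\rho(K \cdot \Omega)$ is a bounded set of operators applied to the bounded set $\{v\}$ (or to $\Omega v$), so $(y_n \Omega v)$ is uniformly bounded. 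The real content is the converse (A) $\Rightarrow$ (B), and the first thing I would do is observe that we may replace $G$ by the quotient modulo the kernel $N$ of $\rho$ composed with... — more precisely, $F$ is a closed subgroup of $G$ containing no information-losing obstruction, and the quotient $G/F$ embeds (as a real analytic manifold) into a single orbit in a product of projective-ish spaces; the point is to reduce the assertion "$(y_n)$ bounded mod $F$" to "$(y_n)$ stays in a compact subset of the orbit map image", which is a statement about properness.

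For (A) $\Rightarrow$ (B), the key is to exploit that $y_n \in Y$ together with the lower bound (1.2) from Theorem 1.2. Here is the mechanism I would try. Apply inequality \eqref{ineqlemma} not to the single vector $v$ but simultaneously to each vector in a suitable finite spanning family built from $\Omega v$ — concretely, consider the representation $\rho$ and the vectors $w_1, \dots, w_k$ obtained by differentiating $\omega \mapsto \rho(\omega) v$ at interior points of $\Omega$, so that the $w_i$ span the smallest $\rho(\lie h)$-invariant... no: rather, choose $w_i = \rho(\omega_i) v$ for finitely many $\omega_i$ in the interior of $\Omega$ so that the $w_i$ span $W := \mathrm{span}\,\rho(H)v$ — possible because $\Omega$ has nonempty interior and $H$ is connected, so $\rho(\Omega)v$ spans the same space as $\rho(H)v$. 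Now $F$ is exactly the kernel of the action of $G$ on this configuration, and in fact $F = \{g : \rho(g)\text{ fixes each }w_i\} = \{g: \rho(g)|_W = \mathrm{Id}_W \text{ in the appropriate sense}\}$, so $G/F$ maps properly and injectively (as topological spaces) onto the $G$-orbit of $(w_1, \dots, w_k)$ in $V^k$. Hence (B) is equivalent to: $(\rho(y_n) w_1, \dots, \rho(y_n) w_k)$ stays in a bounded subset of $V^k$. Each $\rho(y_n) w_i = \rho(y_n)\rho(\omega_i) v = \rho(y_n \omega_i) v$, and by (A) these are all bounded since $\omega_i \in \Omega$. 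So far this gives (B) for free, which cannot be right — the subtlety must be that $G/F \to G\cdot(w_1,\dots,w_k)$ is a bijective continuous map but \emph{not} a homeomorphism in general; the orbit need not be locally closed, and boundedness in $V^k$ does not force boundedness in $G/F$.

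This is precisely where Theorem 1.2 enters and is the \textbf{main obstacle}: one must upgrade "$\rho(y_n)w_i$ bounded" to "$y_n$ bounded mod $F$" using that $y_n \in Y$. The idea is that $Y$ is built (in the proof of Theorem 1.2, which we are allowed to use in the form of conclusion 2) as a set on which no "escape to infinity inside $\Cent_G(H)^0 \cdot F$-directions" can happen undetectably — more carefully, the decomposition $G = Y \cdot \Cent_G(H)^0$ and the uniform lower bound together say that elements of $Y$ cannot contract the span $W$. I would argue as follows: suppose $(y_n)$ is unbounded mod $F$. Write $y_n = k_n a_n$ in a Cartan-type (or more general $KAK$ / polar) decomposition adapted to an involution stabilizing $H$ (available by property (c) of the conventions), with $k_n$ bounded and $a_n \to \infty$ in the relevant cone; the unboundedness mod $F$ forces, via the weight-space decomposition of $V$ under the corresponding torus, that for some vector $u \in W$ the component of $\rho(a_n) u$ along some nonzero weight escapes to infinity or to zero. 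The case "escapes to infinity" contradicts nothing yet — indeed (A) only gives an \emph{upper} bound. The case "$\rho(y_n)$ \emph{contracts} some direction of $W$" is the one ruled out by \eqref{ineqlemma}: applying the lower bound $\sup_{\omega\in\Omega}\Nm{\rho(y_n\omega)(u)} \ge c\Nm{u}$ for all $u$, and noting $\{\rho(\omega)u : \omega \in \Omega, u \in W, \Nm u = 1\}$ stays in a fixed annulus of $W$, we conclude that $\rho(y_n)|_W$ has operator norm on $W$ bounded below. Combined with (A) (operator norm of $\rho(y_n)|_W$ bounded above, since $\rho(y_n)w_i$ bounded and the $w_i$ span $W$), we get that $\rho(y_n)|_W$ lies in a compact subset of $GL(W)$. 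The final step is to deduce from this that $y_n$ is bounded mod $F$: this requires knowing that $F$ is (up to finite index / up to the compact part) exactly $\{g : \rho(g)|_W \in (\text{compact})\}$-saturated, i.e. that the map $G/F \to GL(W)$, $gF \mapsto \rho(g)|_W$, is proper. Properness here is where I expect to spend the most effort — it should follow from reductivity of $H$ (semisimplicity of $\rho|_H$, property (b)) ensuring $W$ decomposes nicely and from a Chevalley-type argument realizing $F$ as a stabilizer, together with the structure of $Y$; if $G/F$ is not literally a variety one may need the real-algebraic or Lie-theoretic version of the fact that orbit maps of reductive groups on vector spaces with closed orbits are proper onto their image. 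I would handle this by passing to the Zariski closure of $\rho(G)$ and using that $W$, being a sum of $\rho(H)$-fixed-vector translates, makes the relevant orbit closed.
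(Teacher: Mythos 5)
Your direction $(B)\Rightarrow(A)$ is essentially the paper's, and your preliminary remarks (replacing $\Omega v$ by a finite spanning set, the bijective but possibly non-proper orbit map $G/F\to G\cdot(w_1,\dots,w_k)$) are all on target. The substance of $(A)\Rightarrow(B)$, however, has genuine gaps which I do not think can be repaired along your chosen route.

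First, the space $W=\mathrm{span}\,\rho(H)v$ is $H$-invariant but not $G$-invariant, so $\rho(y_n)\vert_W$ is a map $W\to V$ rather than $W\to W$, and the phrases ``$\rho(y_n)\vert_W$ has operator norm bounded below'' and ``$\rho(y_n)\vert_W$ lies in a compact subset of $\GL(W)$'' are ill-posed. Second, and more fundamentally, even granting a $G$-invariant surrogate for $W$, the inference ``Theorem~\ref{lemma} rules out the case where $\rho(y_n)$ contracts some direction of $W$'' is false: the lower bound \eqref{ineqlemma} is a supremum over $\omega\in\Omega$, and $\Omega$ is free to rotate the would-be contracted vector into an uncontracted direction before $y_n$ is applied. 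Concretely, if $W\cong\R^2$ with $H$ acting through rotations and $\rho(y_n)\vert_W=\mathrm{diag}(1,1/n)$, then the sup over a fixed arc $\Omega$ keeps $\sup_\omega\Nm{\rho(y_n\omega)u}\ge c\Nm{u}$ uniformly, and $(y_n\Omega v)$ stays bounded for $v=(1,0)$, yet $\rho(y_n)\vert_W$ degenerates and individual directions are contracted. Theorem~\ref{lemma} forbids uniform contraction of a whole $\Omega$-translate of $v$, not of single vectors. Third, even if you could bound the operator norms of $\rho(y_n)\vert_W$ above and below, that does not put them in a compact subset of $\GL(W)$ (the diagonal example above has operator norm $1$ for all $n$ but is unbounded in $\GL(W)$).

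The paper's mechanism is genuinely different and is the missing idea. One builds a \emph{$G$-invariant} finite-dimensional space $W$ of functions $\Omega\to V$ generated by $f_v:\omega\mapsto\omega v$, embedded in $\Hom(\langle f_v(\Omega)\rangle,V)$, so that the fixator of $f_v$ is exactly $F$ and $G/F\to G f_v$ is a diffeomorphism. If $(A)$ holds and $(B)$ fails, one extracts a subsequence with $y_n f_v\to f_\infty$; since the orbit map is a homeomorphism onto $G f_v$, $f_\infty$ must lie in the boundary of the orbit, and in fact (Lemma~\ref{bordzariski}) in the Zariski boundary. The decisive step is Kempf's instability lemma (\cite[Lemma~1.1(b)]{Kempf}), which furnishes a $G$-equivariant polynomial map $\Phi:W\to W'$ to an auxiliary representation with $\Phi^{-1}(0)$ equal to that Zariski boundary. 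This converts ``escape to the boundary'' into ``convergence to $0$'' in $W'$: $y_n\Phi(f_v)\to 0$, and running the same argument with $h\Omega$ for each $h\in H$ gives $y_n h\Phi(f_v)\to 0$ pointwise on $H$, hence uniformly on a compact $\Omega'\subset H$ with nonempty interior. Only at this final stage is Theorem~\ref{lemma} applied, to $(\Omega', W', \Phi(f_v))$, producing the contradiction. Your attempt applies Theorem~\ref{lemma} directly in $V$, where it gives no contradiction; the point of Kempf's construction is to manufacture a representation in which the contradiction does appear.
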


\subsubsection{About Theorem~\ref{lemma}} \label{subsec:lemma}
 Roughly Theorem~\ref{lemma} means that one cannot uniformly contract a piece 
 $\Omega\cdot v$ of a $H$-orbit if one acts with an element $y$ which is 
 ``orthogonal''\footnote{Cf.\ Theorem~\ref{Mostow} below.} to the centralizer of $H$. 
 The heuristic is the following: might $v$ itself be contracted by $y$, the $y\Omega y^{-1}$ part in 
 \begin{equation}\label{omegaexp}
 y\cdot\Omega v=y\Omega y^{-1}\cdot yv  
 \end{equation}
 would be sufficiently expanded in some direction. For $y$ in $\Cent_G(H)$, (\ref{omegaexp}) 
yields $y\cdot\Omega v=\Omega\cdot yv$. Assuming that $\Omega$ is bounded, the inequality 
in~(\ref{ineqlemma}) cannot hold with a uniform constant for $y$ in $\Cent_{G}(H)$ and $v$ in 
$V\smallsetminus\{0\}$, provided that
 ${\Nm{yv}}/{\Nm{v}}$ can take arbitrarily small values (equivalently, $\rho(\Cent_{G}(H))$ is not 
compact). As a result, the first condition of Theorem~\ref{lemma} is essentially optimal, for such 
an $\Omega$. 
 
 Our proof uses a different and novel approach as compared to the original proof of 
\cite[Proposition~4.4]{EMSGAFA}, and hence it is applicable in greater generality. It also involves 
a general decomposition theorem of Mostow (Theorem~\ref{Mostow} below), compatibility of 
Cartan decompositions under Lie algebra representations (Remark~\ref{rem:remarks}), and the 
convexity of the exponential function (Proposition~\ref{cvx}).

Actually, we will prove Theorem~\ref{lemma} under weaker hypotheses on $\Omega$ 
(condition~$(*)$ of Corollary~\ref{constant}), for an explicitly defined subset $Y$ 
(see~\ref{notations}), and 
obtain an effective constant $c$ (see equation~(\ref{effectiveconstant})). 

\subsubsection{About~Theorem~\ref{theofocusing}} \label{subsec:focusing}
Let us spell out the terminology used. In property~(A), the sequence of subsets 
$(y_n \Omega v)_{n\in\N}$ is said to be \emph{uniformly bounded} if there is a compact 
subset of $V$ 
containing simultaneously each  of the $y_n \Omega v$; equivalently, 
\begin{equation}\label{uniformbddness}
\text{for any norm on } V, \exists c\in\R, \forall n\in \N, 
\forall \omega\in\Omega, \Nm{y_n \omega v}\infa c.
\end{equation}
Concerning the property $(B)$,  the \emph{point-wise stabilizer} of $\Omega v$ denotes the 
subgroup 
\begin{equation} \label{stabilizer}
F=\bigl\{ g\in G~\bigl|~\forall \omega \in \Omega, g \omega v=\omega v \bigr\}
\end{equation}
of elements of $G$ fixing point-wise each element of $\Omega v$.

The following are notable differences with~Theorem~\ref{lemma}. We are interested in 
boundedness for the  translated piece of orbit $y_n \Omega v$, instead of uniform convergence 
to $0$; at the very end of the proof of Theorem~\ref{theofocusing} we deduces the uniform 
convergence from Theorem~\ref{lemma}. The subset $\Omega$ is therefore required to be 
bounded (and here again, one can replace the nonempty interior condition by Zariski density if 
$H$~is linear, and even weaker conditions, allowing $\Omega$ to be finite are possible). More 
importantly, one fixes a vector $v$ of $V$. The  focusing condition (B) depends greatly on $v$.

\begin{remark} The $p$-adic analogue of Theorem~\ref{lemma} has been established by 
Richard~\cite[\emph{Expos\'{e}~V}]{These} by following up and extending some of the ideas of 
the current article through extensive work. Its $S$-adic analogue is deduced 
in~\cite[\emph{Expos\'{e}~VI}]{These}. The $p$-adic analogue of Theorem~\ref{theofocusing} 
follows from Remark~\ref{rem:kLC}, and the corresponding $S$-arithmetic analogue easily 
follows and is derived and used in~\cite{focusing}.
\end{remark}
 
\subsection{Non-divergence of translated homogeneous measures}\label{sec:nondiv}

The following main theorem of \cite{EMSGAFA} can be obtained from the general set up of 
\cite{KM-Annals} and Theorem~\ref{lemma} in a straightforward manner as explained below 
(cf.~Corollary~\ref{cor:invariant}). 

\begin{theorem}[Eskin-Mozes-Shah {\cite[Theorem~1.1]{EMSGAFA}}] \label{thm:gafa}
Let $G$ be reductive real algebraic group defined over $\Q$ with no nontrivial $\Q$-characters, 
and $\Gamma\subset G(\Q)$ be a lattice in $G$. Let $H$ be a reductive subgroup defined over 
$\Q$ such that $Z(H)$ is $\Q$-anisotropic. Let $\mu_{H}$ denote the $H$-invariant probability 
measure on $H/H\cap\Gamma$. Then the collection of measures $\{g\mu_{H}:g\in G\}$ is 
pre-compact in the space of probability measures on $G/\Gamma$.
\end{theorem}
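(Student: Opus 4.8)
The plan is to deduce relative compactness from the quantitative non\-divergence machinery of~\cite{KM-Annals}, feeding it with Theorem~\ref{lemma}. By Prokhorov's theorem, the family $\{g\mu_H:g\in G\}$ of probability measures on $G/\Gamma$ is relatively compact in the space of probability measures exactly when it is tight, i.e.\ when for every $\varepsilon>0$ there is a compact $K\subseteq G/\Gamma$ with $(g\mu_H)(K)\ge 1-\varepsilon$ for all $g\in G$; so it suffices to establish this uniform non\-escape of mass.

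First I would shrink the range of~$g$. Write $Z=\Cent_G(H)$ (the group called $Z(H)$ in the statement), which is defined over~$\Q$ since $H$ is. The hypothesis that $Z$ is $\Q$-anisotropic gives, by Godement's criterion, that $Z^0\cap\Gamma$ is a \emph{cocompact} lattice in $Z^0=\Cent_G(H)^0$; fix a compact set $C\subseteq Z^0$ with $Z^0=C\cdot(Z^0\cap\Gamma)$. Any $\gamma\in Z\cap\Gamma$ commutes with~$H$ and lies in~$\Gamma$, so it preserves the closed orbit $He\Gamma\cong H/(H\cap\Gamma)$, and $\gamma\mu_H$, being again an $H$-invariant probability measure on that orbit, equals~$\mu_H$. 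Combining $\gamma\mu_H=\mu_H$ with the decomposition $G=Y\cdot Z^0$ from conclusion~1) of Theorem~\ref{lemma}: any $g\in G$ can be written $g=yz_0=yz\gamma$ with $y\in Y$, $z\in C$, $\gamma\in Z^0\cap\Gamma$, whence $g\mu_H=yz\gamma\mu_H=(yz)\mu_H$. Thus $\{g\mu_H:g\in G\}=\{(yz)\mu_H:y\in Y,\ z\in C\}$, and it is enough to bound the escape of mass over this smaller family.

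Next I would cut $\mu_H$ into local pieces and invoke~\cite{KM-Annals}. Fix a bounded open neighbourhood $U\ni e$ in~$H$; since $H/(H\cap\Gamma)$ has finite volume, for each~$\varepsilon$ one can write $\mu_H=\sum_{i=1}^{N}\mu_H^{(i)}+r_\varepsilon$ where each $\mu_H^{(i)}$ is dominated by the push\-forward of normalized Haar measure on~$U$ under $u\mapsto u h_i e\Gamma$ for some $h_i\in H$, and $r_\varepsilon\ge0$ has total mass $<\varepsilon$. For $y\in Y$, $z\in C$, using that $z$ centralizes~$H$ one computes $(yz)\cdot(uh_ie\Gamma)=(y u h_i z)e\Gamma$, so $(yz)\mu_H^{(i)}$ is dominated by the push\-forward of $\mathrm{Haar}|_U$ under $u\mapsto (yuh_iz)e\Gamma$. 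The linearization data of~\cite{KM-Annals} provides finitely many $\Q$-representations $\rho_j\colon G\to\GL(V_j)$ with norms $\Nm{-}_j$ and discrete, $\Gamma$-stable subsets $\mathcal D_j\subseteq V_j$ with $0\notin\mathcal D_j$ (the ``$\Gamma$-rational'' vectors), whose associated height functions cut out a compact exhaustion of $G/\Gamma$; and its non\-divergence theorem says that each $(yz)\mu_H^{(i)}$ puts mass $<\varepsilon/N$ outside a fixed compact set provided that (a) the functions $u\mapsto\Nm{\rho_j(yuh_iz)w}_j$ on~$U$ are uniformly $(C,\alpha)$-good in $(y,z,h_i,w)$ — which holds because, for a fixed ball~$U$, these functions all lie in one finite\-dimensional space of real\-analytic functions, irrespective of the translating parameters — and (b) the quantity $\inf\bigl\{\ \sup_{u\in U}\Nm{\rho_j(yuh_iz)w}_j\ :\ 1\le i\le N,\ j,\ y\in Y,\ z\in C,\ w\in\mathcal D_j\ \bigr\}$ is strictly positive.

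The heart of the matter is the lower bound~(b), and this is precisely what Theorem~\ref{lemma} supplies. For each~$i$ the set $\Omega_i:=Uh_i\subseteq H$ has nonempty interior, so conclusion~2) of Theorem~\ref{lemma} applied to $\Omega_i$, $\rho_j$ and $\Nm{-}_j$ gives $c_{i,j}>0$ with $\sup_{u\in U}\Nm{\rho_j(y(uh_i))v}_j=\sup_{\omega\in\Omega_i}\Nm{\rho_j(y\omega)v}_j\ge c_{i,j}\Nm{v}_j$ for all $y\in Y$, $v\in V_j$. Taking $v=\rho_j(z)w$ (and using $\rho_j(y(uh_i))\rho_j(z)=\rho_j(yuh_iz)$), the compactness of~$C$ (so $\Nm{\rho_j(z)w}_j\ge\Nm{w}_j/\sup_{z\in C}\Nm{\rho_j(z)^{-1}}$) and the discreteness of~$\mathcal D_j$ away from~$0$ (so $\Nm{w}_j\ge\delta_j>0$), then minimizing over the finitely many pairs $(i,j)$, yields~(b). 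Summing the resulting estimates over the $N$ pieces and adding the mass $<\varepsilon$ of $r_\varepsilon$ gives $(g\mu_H)(K^{c})<2\varepsilon$ for all $g\in G$ with $K$ compact; letting $\varepsilon\to0$ proves tightness, hence the theorem.

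I expect the only genuine obstacle to be organizational: Theorem~\ref{lemma} controls elements of~$Y$ only, whereas here $g$ ranges over all of~$G$; the decomposition $G=Y\cdot\Cent_G(H)^0$ together with the $\Q$-anisotropy hypothesis — which is exactly what makes the centralizer factor $\Gamma$-cocompact, hence harmless once its integral part is absorbed into~$\Gamma$ via $\gamma\mu_H=\mu_H$ — is what bridges the gap, and after this reduction the verification of the~\cite{KM-Annals} hypothesis collapses to a single application of Theorem~\ref{lemma}. The remaining ingredients — the flow\-box decomposition of the finite measure~$\mu_H$, the uniform $(C,\alpha)$-goodness of the orbit functions, and the discreteness of the sets~$\mathcal D_j$ — are standard parts of that formalism and I would only cite them.
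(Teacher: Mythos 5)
Your proposal is correct and takes essentially the approach the paper itself sketches (the paper only says the result "can be obtained from the general set up of \cite{KM-Annals} and Theorem~\ref{lemma} in a straightforward manner" and leaves the details implicit). Your reduction of arbitrary $g\in G$ to a pair $(y,z)\in Y\times C$ via the decomposition $G=Y\cdot\Cent_G(H)^0$ of conclusion~1), Godement's criterion for the $\Q$-anisotropic centralizer, and the invariance $\gamma\mu_H=\mu_H$ for $\gamma\in\Cent_G(H)\cap\Gamma$, followed by the flow-box decomposition and the Kleinbock--Margulis nondivergence machinery with conclusion~2) of Theorem~\ref{lemma} furnishing the uniform lower bound, is exactly the intended argument.
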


In \cite[Proposition~4.4]{EMSGAFA} the Theorem~\ref{lemma} was proved for abelian $H$. As 
Theorem~\ref{lemma} was not available for semisimple groups $H$, Theorem~\ref{thm:gafa} 
was first proved for semi-simple groups without compact factors using non-divergence of 
unipotent flows due to Dani and Margulis~\cite{DM-MathAnn}, and then for compact simple 
factors in a very indirect manner using a trick involving its complexification. Then another 
argument was required for combining the toral and semisimple parts of $H$. This method is 
somewhat artificial, and hence does not generalize in a natural way. Therefore 
Theorem~\ref{lemma} is an important missing component from \cite{EMSGAFA} 
from the point of view of 
having a natural method of proving non-divergence of translates of pieces orbits of reductive 
groups on homogeneous spaces. 

It might be worthwhile to note that our proof is effective in most aspects. In particular, it makes it 
possible to quantitatively estimate the divergence of translated measures in the context 
of~\cite{EMSGAFA}. 

\begin{remark}
We note that the $S$-arithmetic generalization of Theorem~\ref{thm:gafa} has been obtained in 
\cite[Exp VI]{These}, by putting together \cite{KT}, Theorem~\ref{lemma} and its $p$-adic 
analogue in \cite[Exp V]{These} in a straightforward manner. This more natural approach  allows 
some improvements on Theorem~\ref{thm:gafa} above: the real Lie group $H$ need not be 
defined over $\Q$, not even algebraic over $\R$, and only assume that $Z(H)$ projects to a 
pre-compact set in $G/\Gamma$. 
\end{remark}

\subsection{Equidistribution of translates and the focusing condition} 

One of our goals is to analyze the limit distribution of $y_{n}\mu_{H}$ for a sequence 
$y_{n}\to \infty$ in $Y$ as in Theorem~\ref{theofocusing}. For this purpose we consider a small 
pre-compact nonempty open set $\Omega\subset H$ with its boundary having zero Haar measure of 
$H$, and consider the Haar measure of $H$ restricted to $\Omega$, normalize it and let 
$\mu_\Omega$ be its pushforward on $G/\Gamma$.  We want to understand the weak$^\ast$ 
accumulation points of the sequence of translated measures $\{y_n\mu_\Omega\}_{n\in\N}$. In 
view of non-divergence of unipotent trajectories after Margulis~\cite{Mar-nondiv} and 
Dani~\cite{Dani-nondiv,Dani-rk2}, and its generalisations in 
\cite{DM-asymptotic,EMSGAFA,Shah-horo,KM-Annals,KT} one reduces to showing that the sequence of 
measures $\{y_n\mu_\Omega\}$ is pre-compact in the space of probability measures on $G/\Gamma$ if 
and only if for certain 
finite dimensional representation $V$ of $G$ and any nonzero vector $p\in V$ with $\Gamma p$ 
discrete, the set  $y_n\Omega \Gamma p$ avoids a fixed neighborhood of $0$ for all $n$.  This 
condition is precisely the conclusion of Theorem~\ref{lemma} as explained in 
\ref{subsec:lemma}. Therefore we pass to a subsequence of $\{y_n\}$ so that the sequence 
$\{y_n\mu_{\Omega}\}$ converges to a probability measure $\mu$ on $G/\Gamma$. Again using 
Theorem~\ref{lemma} and Theorem~\ref{theofocusing} for a specific representation, one can 
show that $\mu$ is invariant under a nontrivial unipotent subgroups of $G$. More precisely, one 
obtains

\begin{corollary} \label{cor:invariant} Let the notation be as in Theorem~\ref{theofocusing}. 
Suppose further that $\{y_{n}\}$ has no convergent subsequence and $\partial\Omega$ admits 
null Haar measure of $H$, and consider the restriction of the Haar measure of $H$ to $\Omega$. 
Let $\mu_{\Omega}$ denote its normalized pushforward on the image of $\Omega$ on $G/
\Gamma$, where $\Gamma$ is a lattice in $G$. Then after passing to a subsequence, 
$y_{n}\mu_{\Omega}$ converges to a probability measure $\mu$ on $G/\Gamma$, and $\mu$ is 
invariant under a nontrivial $\Ad$-unipotent one-parameter subgroup of $G$.
\end{corollary}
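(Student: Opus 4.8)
The plan is to feed Theorem~\ref{lemma} into the Dani--Margulis non-divergence machinery to produce~$\mu$, and then to obtain the unipotent invariance by a conjugation-and-rescaling argument whose crucial inputs are again Theorem~\ref{lemma} and, decisively, Theorem~\ref{theofocusing}. For Step~1 (non-divergence and extraction of~$\mu$): recall that the cusp of~$G/\Gamma$ is governed by finitely many pairs~$(\rho_i,p_i)$, with $\rho_i\colon G\to\GL(V_i)$ a finite dimensional representation and $p_i\in V_i\setminus\{0\}$ a vector with $\Gamma p_i$ discrete, so that a point is $\varepsilon$-deep in the cusp only if $\Nm{\rho_i(g\gamma)p_i}<\varepsilon$ for some~$i$ and some $\gamma\in\Gamma$ (Dani--Margulis; \cite{DM-MathAnn,DM-asymptotic,KM-Annals,KT}). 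In bounded analytic coordinates on~$H$ the functions $\omega\mapsto\Nm{\rho_i(y_n\omega\gamma)p_i}$ form a $(C,\alpha)$-good family with $C,\alpha$ depending only on $\dim V_i$ and $\dim H$, hence uniformly in $n$ and $\gamma$. I would apply conclusion~2) of Theorem~\ref{lemma} with $\rho=\rho_i$, $v=\gamma p_i$ to get $\sup_{\omega\in\Omega}\Nm{\rho_i(y_n\omega)(\gamma p_i)}\ge c_i\Nm{\gamma p_i}\ge c_i\delta_i$, where $\delta_i=\inf_{\gamma}\Nm{\gamma p_i}>0$ by discreteness of $\Gamma p_i$ and $p_i\neq 0$; the $(C,\alpha)$-good property then upgrades this to $\abs{\{\omega\in\Omega:\Nm{\rho_i(y_n\omega)(\gamma p_i)}<\varepsilon\}}\ll(\varepsilon/\delta_i)^{\alpha_i}\abs{\Omega}$, uniformly in $n$ and $\gamma$. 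This is precisely the input to the Dani--Margulis non-divergence criterion, so $\mu_\Omega$ gives uniformly small mass to the $\varepsilon$-cusp; hence $\{y_n\mu_\Omega\}$ is relatively compact and, after passing to a subsequence, converges to a probability measure~$\mu$.

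For Step~2 (producing an invariant unipotent) the key point I would establish first is that $\Ad(y_n)|_{\lie{h}}$ is unbounded. Here I would invoke Theorem~\ref{theofocusing} with $V=\lie{g}^{\oplus m}$, $\rho=\Ad^{\oplus m}$, and $v=(X_1,\dots,X_m)$ for a basis $X_1,\dots,X_m$ of~$\lie{h}$: for any $\omega_0\in\Omega$ the vectors $\Ad(\omega_0)X_j$ again span $\lie{h}$, so $\mathrm{span}\,\Ad(\Omega)\{X_j\}=\lie{h}$ and the point-wise fixator of~$\Omega v$ is $\{g:\Ad(g)|_{\lie{h}}=\Id\}=\Cent_G(H)$, while $(y_n\Omega v)_n$ is bounded in $\lie{g}^{\oplus m}$ exactly when $\Ad(y_n)$ is bounded on the bounded spanning set $\Ad(\Omega)\{X_j\}$, i.e.\ when $\Ad(y_n)|_{\lie{h}}$ is bounded. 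So by Theorem~\ref{theofocusing}, $\Ad(y_n)|_{\lie{h}}$ bounded is equivalent to $(y_n)$ being bounded modulo $\Cent_G(H)$. To exclude the latter I would argue: if $y_n=b_nz_n$ with $b_n$ bounded and $z_n\in\Cent_G(H)$, then $z_n\to\infty$; picking a faithful (hence proper, by semisimplicity of~$G$) representation $\sigma\colon G\to\GL(W)$ and unit vectors $w_n$ maximally contracted by $\sigma(z_n)$, and using $z_n\omega=\omega z_n$ for $\omega\in\Omega\subset H$, one finds $\sup_{\omega\in\Omega}\Nm{\sigma(y_n\omega)w_n}\le\Nm{\sigma(b_n)}\cdot\sup_{\omega}\Nm{\sigma(\omega)}\cdot\Nm{\sigma(z_n)w_n}\to 0$, contradicting conclusion~2) of Theorem~\ref{lemma} applied to~$\sigma$. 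Hence $\Ad(y_n)|_{\lie{h}}$ is unbounded.

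Given this, the uniform boundedness principle on~$\lie{h}$ yields a fixed $X\in\lie{h}$ with $\lambda_n:=\Nm{\Ad(y_n)X}\to\infty$ along a subsequence. Setting $h_n=\exp(\lambda_n^{-1}X)\in H$, we get $h_n\to e$ while $y_nh_ny_n^{-1}=\exp(X_n)$ with $X_n:=\lambda_n^{-1}\Ad(y_n)X$ of unit norm; passing to a further subsequence, $X_n\to Z\neq 0$, and since $\ad(X_n)=\lambda_n^{-1}\Ad(y_n)\ad(X)\Ad(y_n)^{-1}$ is $\GL(\lie{g})$-conjugate to $\lambda_n^{-1}\ad(X)$, all its eigenvalues tend to~$0$, so $\ad(Z)$ is nilpotent and $\{\exp(tZ)\}_{t\in\R}$ is a nontrivial $\Ad$-unipotent one-parameter subgroup. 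Finally, because $\bord\Omega$ is Haar-null and $h_n\to e$, the normalized measures $h_n\mu_\Omega$ converge to $\mu_\Omega$ in total variation, whence $y_nh_n\mu_\Omega\to\mu$; writing $y_nh_n\mu_\Omega=(y_nh_ny_n^{-1})(y_n\mu_\Omega)$ and letting $n\to\infty$, joint continuity of the $G$-action on probability measures on~$G/\Gamma$ gives $\exp(Z)\mu=\mu$, and replacing $X$ by~$tX$ throughout yields $\exp(tZ)\mu=\mu$ for all~$t\in\R$, which is the assertion.

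The expected main obstacle is the first claim of Step~2, that $y_n\to\infty$ in~$Y$ forces $\Ad(y_n)|_{\lie{h}}$ to be unbounded: Step~1 is a routine application of the Dani--Margulis estimates once Theorem~\ref{lemma} is granted, and the final limit manipulations are soft, but the transversality of~$Y$ to $\Cent_G(H)$ encoded in conclusion~2) of Theorem~\ref{lemma} and the focusing dichotomy of Theorem~\ref{theofocusing} are both indispensable precisely here, and one must take care to exclude the degenerate possibility that $(y_n)$ diverges only along a bounded perturbation of $\Cent_G(H)$.
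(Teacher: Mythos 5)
Your proposal is essentially the argument the paper has in mind: the paper only sketches the proof of Corollary~\ref{cor:invariant} in the paragraph that precedes it (``Again using Theorem~\ref{lemma} and Theorem~\ref{theofocusing} for a specific representation, one can show\ldots''), and what you write supplies exactly the intended details --- the uniform lower bound from conclusion~2) of Theorem~\ref{lemma} feeding the Dani--Margulis / Kleinbock--Margulis nondivergence machinery for Step~1, then $\rho=\Ad^{\oplus m}$ acting on a basis of $\lie{h}$ (whose point-wise fixator of $\Omega v$ is $\Cent_G(H)$) as the ``specific representation'' for Theorem~\ref{theofocusing}, with Theorem~\ref{lemma} ruling out the degenerate alternative that $(y_n)$ stays bounded modulo $\Cent_G(H)$, and finally the Mozes--Shah conjugation-and-rescaling and total-variation passage to the limit.

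Two small repairs are needed. First, ``faithful (hence proper, by semisimplicity of~$G$)'' is too quick: the paper explicitly does not assume $G$ linear, so a faithful finite-dimensional $\sigma$ may not exist, and even when it does, properness (closedness of $\sigma(G)$ in $\GL(W)$) is a theorem rather than a triviality. In the finite-center regime --- which is where the corollary is unproblematic; if $Z(G)$ is infinite and $y_n$ escapes only inside $Z(G)\subset K\subset Y$ every representation-theoretic test degenerates --- you can simply take $\sigma=\Ad$, whose kernel is finite and whose image is the closed adjoint group; your inequality chain then goes through unchanged and you sidestep the existence question. Second, ``replacing $X$ by $tX$ throughout yields $\exp(tZ)\mu=\mu$'' does not work as stated: the replacement rescales $\lambda_n$ by $\abs{t}$ and reproduces $Z$ only up to sign, giving at most $\exp(\pm Z)\mu=\mu$. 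The correct move is to fix $X$ and rerun the final limit with $h_n(s)=\exp(s\lambda_n^{-1}X)$ for each $s\in\R$: then $h_n(s)\to e$, $y_nh_n(s)y_n^{-1}=\exp(sX_n)\to\exp(sZ)$, and the same total-variation and joint-continuity manipulations give $\exp(sZ)\mu=\mu$ for every $s$, which is the asserted invariance under the whole one-parameter group.
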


Then we can apply Ratner's classification~\cite{R-measure} of ergodic invariant measures for the 
unipotent group action to $\mu$. Using the linearization techniques as developed in 
\cite{DS,DM-MathAnn,Shah-uniform,DM-Limit,EMS,Tom-arithmetic}, we can show that if for certain finite 
dimensional representation $V$ and a discrete orbit  of a nonzero $p\in V$ under $\Gamma$, the 
sets $y_n \Omega\Gamma p$ avoid any given large ball in $V$ for all large $n$, then $\mu$ is 
invariant under $G$; that is, the sequence $\{y_n\mu_\Omega\}$ is equidistributed in $G$. 
Therefore if $\mu$ is not $G$-invariant then there exists a sequence 
$\{\gamma_n\}\subset\Gamma$ such that $\{y_n\Omega \gamma_n p\}$ is uniformly bounded. 
Because of 
Theorem~\ref{lemma}, $\{\gamma_{n}p\}$ must be bounded, and since $\Gamma p$ is discrete, by 
passing to a subsequence, we have that $\gamma_{n}p=\gamma_{1} p:=v$ for all $n$. Now we 
can apply Theorem~\ref{theofocusing} to deduce that $\{y_{n}\}$ is bounded modulo the 
point-wise stabilizer of $\Omega v$. Since $\Omega$ is Zariski dense in $H$, we have that 
$\{y_{n}\}$ is bounded in $G$ modulo $\bigcap_{h\in H} h G_{v} h^{-1}$, where $G_{v}$  being the 
stabilizer of $v$ in $G$. This is a linear algebraic condition relating $\{y_n\}$, $H$, and 
$G_{\gamma_{1}p}$. This condition is referred to by  the {\em focusing\/} condition. The focusing 
condition is further analyzed by Richard and Zamojski~\cite{focusing} to obtain very general 
results on limiting distributions of translates of measures.

\subsubsection{On applications in Arithmetic Geometry}
Various new arithmetical applications of Theorems~\ref{lemma} and \ref{theofocusing} are 
obtained in \cite{focusing} by extending the above strategy to the $S$-arithmetic setting. This 
method would also strengthen some of the results proved in \cite{GO:Rational}. 

One type of application of this work is to counting problems, in a setup analogous to 
Manin-Peyre conjecture for homogeneous varieties $G/H$ (cf.\ \cite{GMO} for the case of symmetric 
subgroups $H$). These problems can be translated to a homogeneous dynamical problem about 
translates of measures through the unfolding argument. The dynamical problem can be divided 
into sub-issues: non-divergence (see~\cite[\emph{Expos\'{e}~VI}]{These}); 
focusing (see~\cite{focusing}); and volume computations (\cite{EMS,ACLT,GO:Rational}). 

Another type of application of the measure classification, via adelic mixing or equidistribution of 
Hecke points, was, in the thesis~\cite{These} providing a conditional proof of an  
refinements of a conjecture of Pink: equidistribution of sequences of Galois orbits originating 
from a given Hodge-generic Hecke orbit. This was conditioned to a form of the Mumford-Tate 
conjecture.

The preprint~\cite{focusing} is expected to allow an unconditional statement, though restricted to 
$S$-arithmetic Hecke orbits. Both the above applications involve Theorems~\ref{lemma} and 
\ref{theofocusing} and their $p$-adic analogues.

\subsubsection*{Acknowledgement}
The authors would like to acknowledge the support of the following institutions where the 
collaboration on this work took place: Tata Institute of Fundamental Research (Mumbai, India), 
Universit\'{e} Paris-Sud 11 (Orsay), \'{E}cole polytechnique f\'{e}d\'{e}rale de Lausanne,  ETH Z
\"{u}rich and The Ohio State University (Columbus). The first named author (R.Richard) would 
also like to acknowledge the support of  \'{E}cole normale sup\'{e}rieure de Paris and Universit
\'{e} de Rennes 1. We thank Donna Testerman for pointing out the reference to Kempf's 
paper~\cite{Kempf}. We would like to thank the anonymous referee for careful review of this article, and the
suggestions to help improve its readability. 

\section{Preliminaries}\label{preliminaires}
\emph{We assume all Lie algebras to be real or complex, and finite dimensional.}

Let us first recall some more or less well known facts on Cartan involutions and Cartan 
decompositions. This is for convenience and because no precise references were found for 
some of these facts; most importantly the criterion~(c) of \S\ref{conventions} and functoriality 
properties of the Killing form \ref{functoriality}. In order to prove this criterion~(c), we use a 
variant of \cite[Theorem~6]{Mostow} in the general context of fully reducible linear Lie groups as 
in~\cite[Theorem~7.3]{Mostowherm}. This variant, although not stated, actually follows from 
proofs of~\cite{Mostowherm}. Later will provide more precise definitions, as we do not assume 
our Lie groups to be \emph{linear}; i.e.\ to admit a finite dimensional faithful representation. Our 
proof of Theorem~\ref{lemma} will only rely on criterion~(c) of \S\ref{conventions} (proved 
in~\ref{proofc}), on construction~\ref{constructionnorm}, and on facts collected in 
Remark~\ref{rem:remarks}.

\subsection{Cartan involutions}\label{Cartan}
Let $\lie{g}$ be a finite dimensional real Lie algebra and denote its adjoint representation by 
$\ad:\lie{g}\to\lie{gl}(\lie{g})$. Recall that the \emph{Killing form} on $\lie{g}$ denotes the real 
bilinear form on $\lie{g}\times\lie{g}$ which sends $(X,Y)$ to 
$B(X,Y)=\Tr(\ad(X)\ad(Y))$ \cite[\S3~N$^{0}$6~Def.~4]{B}. This form is obviously symmetric, and is nondegenerate if and only if $\lie{g}$ is semisimple \cite[\S6 N$^{0}$1~Theorem~1]{B}.

\subsubsection{}\label{Killinginvariant} If $G$ is a \emph{linear}\/ real Lie group with Lie algebra 
$\lie{g}$, then isotropic nonzero vectors of $B$ are exactly the generators of (one dimensional and 
non trivial) $\Ad$-unipotent subgroups; nonzero non-isotropic vectors of positive (resp.\ negative) 
norm are exactly the generators of noncompact one-parameter subgroups of semisimple 
elements (resp.\ one-parameter compact subgroup). The Killing form is \emph{completely 
invariant\/} \cite[\S3 N$^{0}$6~Proposition~10]{B}; equivalently, the image $\exp(\ad(\lie{g}))$ of 
$\ad(\lie{g})$ in $GL(\lie{g})$ is contained in the orthogonal group of $B$. In particular, for any $X$ 
and $Y$ in $\lie{g}$, $\exp(\ad(X))(Y)$ is isotropic (resp.\ positive, negative) if and only if $Y$ is.

\subsubsection{}\label{defCartan}A \emph{Cartan involution} of a real Lie algebra $\lie{g}$ means 
an involution $\theta$ of the algebra $\lie{g}$ such that the bilinear form 
$(X,Y)\mapsto B_\theta(X,Y)=-B(X,\theta(Y))$ is symmetric and strictly positive 
definite~\cite[III 7]{Helgason}. In particular $B$ is nondegenerate and $\lie{g}$ is semisimple. 

\subsubsection{}\label{orthogonals} Note that if a linear subspace $\lie{z}$ of $\lie{g}$ is invariant 
under $\theta$, then its orthogonal complements with respect to $B$ and $B_\theta$ coincide. 
Consequently, as $B_\theta$ is anisotropic, $\lie{z}$ and its orthogonal complement are 
supplementary. Moreover, $\theta$ stable subspace are stable under taking adjoint with respect 
to $B_\theta$.

\subsubsection{}\label{compatibility}Consider the adjoint representation 
$\ad:\lie{g}\to\lie{gl}(\lie{g})$ and a Cartan involution $\theta$ on $\lie{g}$. 
Then for any $X\in \lie{g}$, the negative of the adjoint with respect to $B_{\theta}$ of the 
endomorphism $\ad_{X}$ of $\lie{g}$ equals $\ad_{\theta(X)}$. 

\begin{proof}
 We need to show that for any $Y$ and $Z$ in $\lie{g}$,
\begin{equation*}
B_\theta(\ad_X(Y),Z)=B_\theta(Y,-\ad_{\theta(X)}Z).
\end{equation*}
By definition of $\ad$ and $B_\theta$ this equality means
$$B(-[X,Y],\theta(Z))=B(Y,\theta([\theta(X),Z])).$$
Now $B([X,Y],\theta(Z))+B(Y,[X,\theta(Z)])=0$ follows from invariance of the Killing form, whereas 
the identity $\theta([\theta(X),Z])=[X,\theta(Z)]$ follows from the fact that $\theta$ is an algebra 
involution.
\end{proof}

\subsubsection{}\label{GlobalCartan}
A \emph{global Cartan involution} of a connected real Lie group $G$ with Lie algebra $\lie{g}$ is 
an involution $\Theta:G\to G$ whose differential at the neutral element is a Cartan involution of 
$\lie{g}$. Every Cartan involution of $\lie{g}$ extends to $G$.
\begin{proof}
 We may assume $G$ is semisimple, else there is no Cartan involution. According 
 to~\cite[\S6~N$^{0}$2~Theorem~1]{B3}, any Cartan involution $\theta$ has an extension 
 $\widetilde{\Theta}$ to 
the universal cover $\widetilde{G}$ of $G$. It will be enough to show that $\widetilde{\Theta}$ 
fixes element-wise the center $Z\bigl(\widetilde{G}\bigr)$ of $\widetilde{G}$. As $Z
\bigl(\widetilde{G}\bigr)$ is a characteristic subgroup, it is stable under automorphisms. Hence 
$\widetilde{\Theta}$ descends to an involution ${\Theta}^\ad$ of the adjoint group 
$G^\ad=\widetilde{G}\sur Z\bigl(\widetilde{G}\bigr)$, which is linear. It will be enough to show 
that the induced action of $\widetilde{\Theta}$ on the fundamental group $\pi_1(G^\ad)$ is trivial. 
But, from Cartan decomposition~\cite[Theorem~3.2]{Mostowherm}, $G^\ad$ retracts to any of its 
maximal compact subgroups, and one such a maximal compact subgroup is the fixed locus of 
$\Theta^\ad$.
\end{proof}
 
\subsubsection{}\label{projcomp}We will say that a closed subgroup of a \emph{connected 
semisimple} Lie group $G$ is \emph{projectively compact} if its image under the adjoint 
representation $\Ad:G\to GL(\lie{g})$ is compact. When $G$ is \emph{linear}, this property is 
equivalent to compactness. In general this property is stable under direct image by morphisms of 
semisimple connected Lie groups, and inverse images by isogenies. When $G$ is connected, 
semisimple and \emph{linear}, the set of fixed point of a given global Cartan involution defines a 
maximal compact subgroup (recall that $G$ is connected). If $G$ is only assumed to be 
connected and semisimple, writing $G$ as a central covering of its adjoint group, we deduce that 
the set of fixed point of a given global Cartan involution defines a maximal projectively compact 
subgroup.

\subsubsection{}\label{compactalgebra} A semisimple real Lie algebra is said to be~
\emph{compact\/} if its Killing form is totally negative. One actually needs only to ask the Killing 
form to be \emph{anisotropic}.

\subsubsection{} Every semisimple Lie algebra $\lie{g}$ is a direct product of its simple ideals 
\cite[\S6~N$^{0}$2~Corollary~1]{B}, and these simple ideals are pairwise orthogonal for the Killing 
form \cite[\S6 N$^{0}$1~Corollary~1]{B}. For any Cartan involution $\theta$ of $\lie{g}$, one has 
$-B(X,\theta(X))>0$ whenever $X\neq0$. Consequently the image $\theta(X)$ of a non zero 
element $X$ of $\lie{g}$ cannot be orthogonal to $X$ with respect to the Killing form. 
\emph{A fortiori} $\theta$ cannot send any simple ideal of $\lie{g}$ to an orthogonal ideal. 
Consequently, a 
Cartan involution of $\lie{g}$ stabilizes each simple ideal of $\lie{g}$. As ideals of $\lie{g}$ are 
sum of simple ideals \cite[\S6~N$^{0}$2~Corollary~1]{B}, a Cartan involution of $\lie{g}$ stabilizes 
each ideal of $\lie{g}$.

\subsubsection{} According to \cite[\S3~N$^{0}$6~Proposition~9]{B}, the restriction of the Killing form 
of $\lie{g}$ to an ideal $\lie{a}$ is the Killing form of $\lie{a}$. It follows that the restriction of a 
Cartan involution to an ideal is a Cartan involution, and that an endomorphism of $\lie{g}$ is a 
Cartan involution if and only if it stabilizes each simple ideal and its restriction to each simple 
ideal is a Cartan involution. These restrictions being nondegenerate, a simple ideal does not 
intersect its orthogonal: the orthogonal of a simple ideal, and more generally of any ideal 
$\lie{a}$, is the sum of simple ideals not contained in $\lie{a}$. 

\subsubsection{}\label{functoriality}  Consider a semisimple Lie algebra $\lie{g}$ and a morphism 
of Lie algebras $\phi:\lie{g}\to\lie{g}'$. Its kernel $\ker(\phi)$ is an ideal whose orthogonal 
$\ker(\phi)^\bot$ is a supplementary ideal; $\phi$ sends $\ker(\phi)^\bot$ bijectively onto 
$\phi(\lie{g})$. In particular $\phi(\lie{g})$ is a semisimple Lie algebra; any Cartan involution of 
$\lie{g}$ stabilizes $\ker(\phi)$; the induced involution on $\phi(\lie{g})$ is a Cartan involution. We 
will call the latter the \emph{image Cartan involution}.

\subsubsection{}\label{decomposition} Given a Cartan involution $\theta$ on a semisimple Lie 
algebra $\lie{g}$, the associated \emph{Cartan decomposition} denotes the decomposition 
$\lie{g}=\lie{k}\oplus\lie{p}$ of $\lie{g}$ as a direct sum of the eigenspaces $\lie{k}$ (resp.\ $\lie{p}$) 
of $\theta$ associated with the eigenvalue $+1$ (resp.$-1$). Clearly Cartan involutions and 
associated Cartan decompositions determine each other. By definition $\theta$ is self-adjoint 
with respect to $B_{\theta}$, hence the eigenspaces $\lie{k}$ and $\lie{p}$ are orthogonal 
complements of each other. Consequently, the Cartan involution $\theta$ is determined by 
$\lie{k}$ only (knowing $B$). Note that $\lie{k}$ is a maximal negative anisotropic subspace of 
$B$ and a subalgebra of $\lie{g}$ (it satisfies Frobenius integrability condition), and that $\lie{p}$ is 
maximal positive anisotropic linear subspace and is $\ad(\lie{k})$-invariant.

\subsubsection{}\label{Cartancomplex} Let $\lie{g}_\C=\lie{g}\tens_\R\C$, the complexified Lie 
algebra obtained from $\lie{g}$ by extending the base field. 
According to \cite[III.~Proposition~7.4]{Helgason}, every \emph{Cartan decomposition} of $\lie{g}$, as 
defined in \cite[p.~183]{Helgason} actually comes from a Cartan involution, as defined above, 
and these Cartan decompositions are exactly the restriction to $\lie{g}$ of the Cartan 
decompositions of the real Lie algebra $\lie{g}_\C$ which are invariant (factor-wise) under 
complex conjugation. The anisotropic subalgebra of $\lie{g}_\C$ corresponding to the latter are 
the~\emph{compact} (see \ref{compactalgebra}) real forms of the complex Lie algebra 
$\lie{g}_\C$ which are \emph{invariant} under the complex conjugation relative to the real 
structure induced by $\lie{g}$ (see also~\cite[section~2]{Mostowherm}).

\subsubsection{}\label{invariantcompactrealforms} For a \emph{reductive} lie algebra $\lie{g}$, 
\emph{together with} an embedding $\lie{g}\to\lie{gl}(V)$, for some $V$ of finite dimension, a 
\emph{real form} $\lie{k}$ of $\lie{g}_\C$ is said to be \emph{compact} if $\exp(\lie{k})$ is 
compact in $\GL(V\tens\C)$, and \emph{invariant} if $\lie{k}$ is invariant under the complex 
conjugation on $\lie{g}_\C$ relative to $\lie{g}$. According to~\cite[Lemma~6.2]{Mostowherm}, 
there exists such a compact form if and only if the action of $\lie{g}$ on $V$ is semisimple (the 
``only if'' part is known as ``Weyl's unitary trick''). In such a case, we will say that $\lie{g}$ 
together with its embedding is a \emph{linear fully reducible\/} subalgebra of $\lie{gl}(V)$.

\subsubsection{}\label{nestedextension} The preceding points imply that invariant compact forms 
generalizes to linear fully reducible Lie algebra the Cartan decompositions of semisimple Lie 
algebras. By using~\cite[Theorem~4.1]{Mostowherm}  as in proof of \cite[Theorem~5.1$^{1}$]
{Mostowherm}, given a nested sequence of linear fully reducible subalgebras of $\lie{gl}(V)$, one 
can form a nested sequence of invariant compact real forms of the corresponding complexified 
linear algebras.

\subsubsection{}\label{constructionnorm} Consider now a finite dimensional linear representation 
$\rho:\lie{g}\to\lie{gl}(V)$ of a semisimple Lie algebra. Then, given a Cartan involution of $\lie{g}$ 
we get a Cartan involution of $\rho(\lie{g})$, by~\ref{functoriality}. This Cartan involution 
corresponds to an invariant compact real form on $\rho(\lie{g})_\C$, by~\ref{Cartancomplex}, 
which we can extend to $\lie{gl}(V)$, by~\ref{nestedextension}. 

Any such extension is the unitary group of a euclidean structure on $V$, unique up to 
proportionality. Now $\rho(\lie{g})$ is stable under taking adjoint with respect to the euclidean 
structure on $V$~\cite[proof of Theorem~5.1]{Mostowherm}, and such that the euclidean 
adjunction extends the negative of the image Cartan involution~\ref{functoriality} on $\rho(\lie{g})
$~(\emph{loc. cit.}, equation~(2)).

\subsubsection{}\label{proofc} Criterion~(c) of \S\ref{conventions} clearly follows from the 
corresponding statement at the level of Lie algebras, which we now prove. Namely \emph{a real 
subalgebra $\lie{h}$ of a (finite dimensional) real semisimple Lie algebra $\lie{g}$ is invariant 
under some Cartan involution of $\lie{g}$ if and only if the adjoint action of $\lie{h}$ on $\lie{g}$ is 
fully reducible.}
\begin{proof}
 Consider a subalgebra $\lie{h}$ of a semisimple Lie algebra $\lie{g}$. If $\lie{h}$ is invariant 
under $\theta$, its image under $\ad:\lie{g}\to\lie{gl}(\lie{g})$ is stable under taking adjoint with 
respect to $B_\theta$, according to~\ref{compatibility}. As $B_\theta$ is \emph{anisotropic}, the 
orthogonal complement of a $\ad_{\lie{h}}$-stable subspace defines a \emph{supplementary} 
$\ad_{\lie{h}}$-stable subspace. It implies that $\lie{h}$ acts fully reducibly on $\lie{g}$.

 Assume now that $\lie{h}$ acts fully reducibly on $\lie{g}$. We shall denote  the extension of 
scalars by subscripts. Then the linear subalgebra $\ad(\lie{h})_\C$ of $\lie{gl}(\lie{g})_\C$ has an 
invariant compact real form~\ref{invariantcompactrealforms}. This real form is contained in an 
invariant compact real form of $\ad(\lie{g})_\C$~\ref{nestedextension}. The latter is associated 
with a Cartan involution $\theta$ of $\lie{g}$~\ref{Cartancomplex}. Applying to $\lie{h}$ the Lie 
algebra analogue of decomposition~(2) in proof of Theorem~5.1 in~\cite{Mostowherm}, we see 
that $\lie{h}$ is invariant under $\theta$, each factor being contained in a factor of the 
corresponding Cartan decomposition of $\lie{g}$.
\end{proof}

\subsection{Notational conventions}\label{notations} In the next sections we will often consider 
the following situation. Let us fix, once for all, our notations.
\subsubsection{General notations} \label{generalnotations}
Let $G$ be a connected semisimple Lie group, let $H$ be a connected reductive Lie subgroup in 
$G$. According to criterion~(c) of \S\ref{conventions}, let $\Theta$ be a global Cartan involution 
(\cf~\ref{defCartan}, \ref{GlobalCartan}) of $G$ under which $H$ is invariant. We denote by 
$\Cent_{G}(H)$ the centralizer of $H$ in $G$, and by $K$ the maximal projectively compact 
subgroup consisting of fixed points of $\Theta$ (\cf~\ref{projcomp}).

Denote by $\lie{g}$, $\lie{h}$, $\lie{z_g}$, $\lie{k}$ the Lie algebra of $G$, $H$, $\Cent_{G}(H)$, 
and $K$ respectively, and denote by $\theta$ the differential of $\Theta$ at the identity element.

We write $\lie{k}^\bot$ and $\lie{z_g}^\bot$ for the orthogonal complements, with respect to the 
Killing form (\cf~\ref{Cartan}) on $\lie{g}$, to $\lie{k}$ and $\lie{z_g}$ respectively. We define 
$\lie{p}=\lie{k}^\bot\cap\lie{z_g}^\bot$, $P=\exp_{G}(\lie{p})$~and $Y=K\cdot P$. Finally, 
$B_\theta:\lie{g}\times\lie{g}\to\R$ will be the strictly positive definite symmetric bilinear form on 
$\lie{g}$ associated with $\theta$ (\cf~\ref{defCartan}).

\subsubsection{Relative notations} \label{relativenotations} When considering, in situation 
\ref{generalnotations}, a finite dimensional linear representation $\rho:G\to GL(V)$, we shall use 
the following notations. 

We denote by $\d\rho:\lie{g}\to\lie{gl}(V)$ the differential of the representation $\rho$ and by 
$\lie{z}$ the centralizer of $\rho(H)$ in $\lie{gl}(V)$. 
The Trace map on $\lie{gl}(V)$ is denoted by $\Tr:\lie{gl}(V)\to\R$, and the \emph{trace form\/} 
means the bilinear form $(X,Y)\mapsto\Tr(XY)$ on $\lie{gl}(V)$ (it is the \emph{bilinear form 
associated with the $\lie{gl}(V)$-module $V$}, according to \cite[\S3~N$^{0}$6~Definition~4]{B}).
We write $\lie{z}^\bot$ for the orthogonal complement of $\lie{z}$ with respect to  the trace form.

\subsubsection{Choice of a euclidean structure} \label{subsec:euclidean} Using 
\ref{constructionnorm}, we can fix a euclidean structure (inner product) on $V$ such that the 
involution $\theta_V:\lie{gl}(V)\to\lie{gl}(V)$ given by $X\mapsto -X^{\mathrm{T}}$, the negative of 
the euclidean adjoint, stabilizes $\d\rho(\lie{g})$ and extend the image Cartan involution of 
$\theta$ on $\d\rho(\lie{g})$.

\begin{remark} \label{rem:remarks}
In such a situation, using our observations in section~\ref{Cartan} on Cartan involutions we will 
deduce the following:

\begin{enumerate}
\item In $\lie{g}$, the subspaces $\lie{k}$, $\lie{h}$, and hence $\lie{z_g}$, are  invariant under 
$\theta$.
\item The orthogonal complements of $\lie{k}$ (resp.\ $\lie{h}$, $\lie{z_g}$) in $\lie{g}$ with 
respect to the Killing form or with respect to $B_\theta$ are the same. This orthogonal 
complement is supplementary to $\lie{k}$ (resp.\ $\lie{h}$, $\lie{z_g}$) in $\lie{g}$.
\item \label{remarks:3}The subspace $\lie{z_g}^\bot$ of $\lie{g}$ is invariant under the adjoint 
action of $H$; it is the unique supplementary $H$-stable subspace to the isotypic component 
$\lie{z_g}$ in the $H$-module $\lie{g}$.
\item The map $\d\rho:\lie{g}\to\lie{gl}(V)$ commute with the involutions $\theta$ on $\lie{g}$ and 
$\theta_V$ on $\lie{gl}(V)$.
\item The subspace $\lie{z}^\bot$ of $\lie{gl}(V)$ is invariant under the adjoint action of $H$; it is 
the unique supplementary $H$-stable subspace to the isotypic component $\lie{z}$ in the $H$-
module $\lie{gl}(V)$.
\item\label{rkzbar} The map $\d\rho:\lie{g}\to\lie{gl}(V)$ sends $\lie{z_g}$ to $\lie{z}$ and 
$\lie{z_g}^\bot$ to $\lie{z}^\bot$.
\item In $\lie{gl}(V)$, the subspaces $\d\rho(\lie{k})$, $\d\rho(\lie{z_g})$, $\d\rho(\lie{h})$ and 
hence $\lie{z}$ and $\lie{z}^\bot$, are  invariant under $\theta_V$.
\item\label{rkpiz} The orthogonal projection $\pi_\lie{z}:\lie{gl}(V)\to\lie{z}$ with respect to the 
trace form commutes with $\theta_V$; it sends self-adjoint endomorphisms to self-adjoint 
endomorphisms.
\item\label{rkself} The map $\d\rho$ sends elements of $\lie{k}^\bot$ to self-adjoint 
endomorphisms of $V$.
\end{enumerate}
\end{remark}

\begin{proof}
 \textbf{1} follows from the definition of $\lie{k}$, the assumption on $H$, and the construction of 
$\lie{z_g}$ from $\lie{h}$. \textbf{2} follows from~1, definition of $B_\theta$ and that $B_\theta$ is 
anisotropic (\cf~\ref{orthogonals}).
To prove  \textbf{3}, note that both $\lie{z_g}$ and $B$ are invariant under adjoint action of $H$; 
and that the isotypic components are uniquely defined. \textbf{4} follows from the choice of 
$\theta_V$ in~\ref{subsec:euclidean}. \textbf{5} follows from the same arguments as in~3. To 
prove \textbf{6}, note that as $\d\rho$ commutes with the adjoint action of $H$, it preserves the 
isotypic decomposition. \textbf{7} follows from~1, from~4, from the construction of $\lie{z}$ from 
$\d\rho(\lie{h})$, and that $\Theta_{V}$ preserves the trace form. 
\textbf{8} follows from the observations that both the subspace $\lie{z}$ and the orthogonality 
condition are invariant under $\theta_V$, and that self-adjoint means fixed by $-\theta_V$. 
 \textbf{9} follows from~4. 
 \end{proof}

\section{Effective statements}\label{sectioneffective}
Our effective statements will rely on the Corollary~\ref{coroeffective} of the following result. Note 
that the condition that $p$ ``can only go to infinity in directions orthogonal to $\lie{z_g}$'' is used 
in the next proof only in order to get a uniform lower bound on the eigenvalues.

In this section we follow the notation and convention of section~\ref{notations}. 

\begin{theorem}\label{effective}
 For any $p$ in $P$, the endomorphism $\pi_\lie{z}(\rho(p))$ of $V$ is self-adjoint, positive 
definite, and has no eigenvalue smaller than $1$.
\end{theorem}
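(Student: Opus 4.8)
The plan is to analyze the structure of $\rho(p)$ for $p = \exp(X)$ with $X \in \lie{p} = \lie{k}^\bot \cap \lie{z_g}^\bot$, and to show that $\pi_\lie{z}(\rho(p))$ is positive definite with spectrum bounded below by $1$. First I would observe that $p = \exp(X)$ implies $\rho(p) = \exp(\d\rho(X))$, and since $X \in \lie{k}^\bot$, remark~\ref{remarks}(\ref{rkself}) tells us $\d\rho(X)$ is a self-adjoint endomorphism of $V$ with respect to the chosen euclidean structure. Hence $\rho(p) = \exp(\d\rho(X))$ is self-adjoint and positive definite: in an orthonormal eigenbasis $e_1,\dots,e_n$ of $\d\rho(X)$ with real eigenvalues $\lambda_1,\dots,\lambda_n$, the matrix of $\rho(p)$ is diagonal with entries $e^{\lambda_i} > 0$. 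Then I would apply the orthogonal projection $\pi_\lie{z}: \lie{gl}(V) \to \lie{z}$: by remark~\ref{remarks}(\ref{rkpiz}), $\pi_\lie{z}$ sends self-adjoint endomorphisms to self-adjoint endomorphisms, so $\pi_\lie{z}(\rho(p))$ is automatically self-adjoint. So the content to be extracted is positive-definiteness and the eigenvalue bound.

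For the quantitative part, I would use the fact that $\pi_\lie{z}$ is an \emph{orthogonal} projection with respect to the trace form, which on self-adjoint endomorphisms restricts to a genuine (positive definite) inner product $\langle A, B\rangle = \Tr(AB)$. The key computation is to evaluate $\langle \pi_\lie{z}(\rho(p)) v, v\rangle_V$ for a unit vector $v \in V$; but more cleanly, I would instead work with the trace-form inner product on $\lie{gl}(V)$ directly. Write $\rho(p) = \pi_\lie{z}(\rho(p)) + R$ where $R \in \lie{z}^\bot$ is trace-orthogonal to $\lie{z}$. Since $\pi_\lie{z}(\rho(p)) \in \lie{z}$, we get $\Tr(\pi_\lie{z}(\rho(p))\cdot A) = \Tr(\rho(p)\cdot A)$ for every $A \in \lie{z}$. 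The natural choice is to test against rank-one self-adjoint operators $A = v \otimes v^*$ (projection onto the line $\R v$) — \emph{provided} such operators lie in $\lie{z}$, i.e.\ commute with $\rho(H)$. That holds precisely when $v$ spans an $H$-invariant line; in general this is false, so the estimate cannot be localized to a single vector. Instead I expect the right move is: $\pi_\lie{z}(\rho(p))$ acts as a scalar on each $\rho(H)$-isotypic component of $V$ after further decomposition — more precisely, on each irreducible $H$-submodule the commutant contributes, and $\pi_\lie{z}$ averages $\rho(p)$ over the "$H$-direction". So I would decompose $V$ into $\rho(H)$-isotypic pieces $V = \bigoplus_j V_j$, note $\lie{z} = \bigoplus_j \End_H(V_j)$, and analyze $\pi_\lie{z}$ componentwise; on each $V_j$, $\pi_\lie{z}(\rho(p))$ restricted there is the $\End_H(V_j)$-component of $\rho(p)|_{V_j}$, and this is a positive-definite element of $\End_H(V_j)$ because it is the trace-orthogonal projection of the positive-definite self-adjoint operator $\rho(p)|_{V_j}$ onto the $*$-subalgebra $\End_H(V_j)$, and such projections of positive operators onto $*$-subalgebras are positive (a conditional-expectation-type statement).

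For the eigenvalue bound "no eigenvalue smaller than $1$", I would argue as follows. By the preceding, it suffices to show that for every unit vector $w$ lying in an irreducible $H$-submodule (equivalently, for every $w$ on which the relevant component of $\pi_\lie{z}(\rho(p))$ acts), $\langle \pi_\lie{z}(\rho(p)) w, w\rangle \ge 1$. Using that $\pi_\lie{z}$ is a conditional expectation onto a unital $*$-subalgebra containing the identity, and that $\rho(p) = \exp(\d\rho(X))$, one has the operator inequality $\pi_\lie{z}(\exp(\d\rho(X))) \ge \exp(\pi_\lie{z}(\d\rho(X)))$ by the operator Jensen inequality for the convex function $\exp$ — this is exactly where I expect to invoke Proposition~\ref{cvx} (convexity of the exponential) alluded to in \S\ref{subsec:lemma}. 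Then the crucial point is that $\pi_\lie{z}(\d\rho(X)) = \d\rho(\pi_{\lie{z_g}^\bot \to \lie{z_g}}(\cdots))$-type quantity vanishes: since $X \in \lie{z_g}^\bot$, remark~\ref{remarks}(\ref{rkzbar}) gives $\d\rho(X) \in \lie{z}^\bot$, hence $\pi_\lie{z}(\d\rho(X)) = 0$. Therefore $\pi_\lie{z}(\rho(p)) = \pi_\lie{z}(\exp(\d\rho(X))) \ge \exp(0) = \Id_V$, which gives both positive-definiteness and the eigenvalue bound in one stroke. The main obstacle I anticipate is setting up the operator Jensen / conditional-expectation inequality rigorously in this setting: verifying that $\pi_\lie{z}$ restricted to self-adjoint endomorphisms genuinely behaves like a trace-preserving conditional expectation onto the $*$-subalgebra $\lie{z} = \End_H(V)$ (using that $\lie{z}$ is a unital $*$-subalgebra of $\lie{gl}(V)$ closed under $\theta_V$ by remark~\ref{remarks}(\ref{rkpiz})–(7), and that the trace form is the right invariant inner product), and then applying the matrix inequality $E(e^A) \ge e^{E(A)}$ for such $E$ and self-adjoint $A$. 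Once that machinery is in place, the identification $\pi_\lie{z}(\d\rho(X)) = 0$ from $X \in \lie{z_g}^\bot$ makes the conclusion immediate, and the fact that $p$ ranges only over $P = \exp(\lie{p})$ with $\lie{p} \subset \lie{z_g}^\bot$ is exactly what is needed — consistent with the remark preceding the theorem that orthogonality to $\lie{z_g}$ is used precisely to get the uniform lower bound.
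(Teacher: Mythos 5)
Your overall plan is aimed in the right direction and you correctly identify the key structural inputs: $\rho(p)=\exp(S)$ with $S=\d\rho(\wp)$ self-adjoint (remark~\ref{rkself}); $\pi_\lie{z}(\rho(p))$ self-adjoint (remark~\ref{rkpiz}); and $\pi_\lie{z}(S)=0$ because $\wp\in\lie{z_g}^\bot$ forces $S\in\lie{z}^\bot$ (remark~\ref{rkzbar}). However, the step you lean on to finish --- the \emph{operator} Jensen inequality $\pi_\lie{z}(\exp S)\geq\exp(\pi_\lie{z}(S))$ for conditional expectations --- is not a theorem you can invoke. The Choi--Davis--Hansen--Pedersen circle of results says that $\Phi(f(A))\geq f(\Phi(A))$ for unital positive maps $\Phi$ holds precisely for \emph{operator convex} $f$, and $\exp$ is not operator convex (it grows too fast; operator convex functions on $\R$ are at most quadratic). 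Passing to the smaller class of trace-preserving conditional expectations onto $*$-subalgebras does not rescue the general inequality. You also conflate this with the paper's Proposition~\ref{cvx}, but that proposition is a purely \emph{scalar} trace inequality, $\Tr(\exp(S)\pi)\geq\rank(\pi)\exp\!\left(\Tr(S\pi)/\rank(\pi)\right)$, not an operator inequality, so citing it does not supply the step you need. Your middle paragraph about decomposing into $H$-isotypic pieces and invoking a positivity of conditional expectations is also a detour: positivity of $\pi_\lie{z}$ as a map does not by itself give the eigenvalue bound.

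The paper closes the gap by arguing eigenvalue by eigenvalue, which is where the hypotheses are used in a genuinely different way. Because $\pi_\lie{z}(\rho(p))$ lies in $\lie{z}$, each of its spectral projectors $\pi_\lambda$ is a polynomial in it and hence also lies in $\lie{z}$ and is self-adjoint. Testing traces against $\pi_\lambda$, one finds $\Tr(\pi_\lie{z}(\rho(p))\pi_\lambda)=\Tr(\exp(S)\pi_\lambda)$ since $\rho(p)-\pi_\lie{z}(\rho(p))\perp\lie{z}$, while $\Tr(S\pi_\lambda)=0$ since $S\perp\lie{z}$; the scalar Jensen inequality of Proposition~\ref{cvx} then gives $d_\lambda\lambda\geq d_\lambda\exp(0)=d_\lambda$, i.e.\ $\lambda\geq1$. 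That said, your intended conclusion $\pi_\lie{z}(\rho(p))\geq\Id_V$ is indeed correct, and in the special case $\pi_\lie{z}(S)=0$ there is even a shortcut that avoids both Proposition~\ref{cvx} and any operator-convexity concerns: apply functional calculus to the scalar bound $e^x\geq1+x$ to get $\exp(S)\geq\Id+S$ as operators, and then apply the positivity-preserving, unital, $\lie{z}$-projecting map $\pi_\lie{z}$ to obtain $\pi_\lie{z}(\exp S)\geq\Id+\pi_\lie{z}(S)=\Id$. So the end result is salvageable, but as written the proposal rests on a non-theorem and needs this (or the paper's spectral-projector argument) substituted in.
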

\begin{proof}
Fix $p$ in $P$ and write $p=\exp(\wp)$ for some $\wp$ in $\lie{p}$. From 
$\lie{p}=\lie{k}^\bot\cap\lie{z_g}^\bot$ follows that $\d\rho(\wp)$ belongs to both 
$\d\rho(\lie{k}^\bot)$ and $\d\rho(\lie{z_g}^\bot)$. Consequently $\d\rho(\wp)$ is self-adjoint 
(Remark~\ref{rkself}) and 
orthogonal to $\lie{z}$ with respect to the trace form (Remark~\ref{rkzbar}). We will write $S$ for 
$\d\rho(\wp)$. By~\cite[\S4~Corollary~2]{B3}, we get $\rho(p)=\exp(S)$, so that $\rho(p)$ is self-
adjoint and positive definite. As a result, $\lie{\pi_\lie{z}}(\rho(p))$ is self-adjoint 
(Remark~\ref{rkpiz}), and belongs to $\lie{z}$, the image of $\pi_\lie{z}$.

Let $\lambda$ be an eigenvalue of $\pi_\lie{z}(\rho(p))$, and let $\pi_\lambda$ be the 
corresponding spectral projection. We saw that $\pi_\lie{z}(\rho(p))$ is self-adjoint and commutes 
with $H$, and it is well known that $\pi_\lambda$ belongs to the subalgebra generated by 
$\pi_\lie{z}(\rho(p))$. Consequently $\pi_\lambda$ is self-adjoint and commutes with $H$. In 
particular, $\pi_\lambda$ belongs to $\lie{z}$.

 The difference $\pi_\lie{z}(\rho(p))-\rho(p)$ belongs to the kernel of the projection $\pi_\lie{z}$: it 
is orthogonal to $\lie{z}$, and, in particular, to $\pi_\lambda$. Consequently, 
$$\Tr(\pi_\lie{z}(\rho(p))\pi_\lambda)=\Tr(\rho(p)\pi_\lambda)=\Tr(\exp(S)\pi_\lambda).$$ 
 On the other hand, $\Tr(\pi_\lie{z}(\rho(p))\pi_\lambda)$ equals $d_\lambda\cdot\lambda$, 
where $d_\lambda$ is the rank of $\pi_\lambda$ and $d_\lambda>0$.

From Proposition~\ref{cvx} below, we have the inequality 
$$\Tr(\exp(S)\pi_\lambda)\geq d_\lambda\cdot\exp(\Tr(S\pi_\lambda)/d_\lambda).$$ 
Because $\pi_\lambda$ is in $\lie{z}$, and 
$S$ is orthogonal to $\lie{z}$, $\Tr(S\pi_\lambda)=0$. As a consequence $\lambda\geq 1$. Indeed
\begin{equation*}\label{eigenvalue}
 d_\lambda\cdot\lambda=\Tr(\pi_\lie{z}(\rho(p))\pi_\lambda)
 =\Tr(\exp(S)\pi_\lambda) \geq d_\lambda\cdot\exp(\Tr(S\pi_\lambda)/d_\lambda)
 =d_\lambda\cdot 1.
\end{equation*}
\end{proof}
\newcommand{\rank}{\mathbf{rank}}
\begin{proposition}\label{cvx}
 Let $V$ be a finite dimensional euclidean vector space, let $S$ be a self-adjoint endomorphism 
of $V$ and let $\pi$ be a non zero orthogonal projection in $V$. Then follows 
 \begin{equation}\label{convexity}
  \Tr(\exp(S)\pi)\geq\rank(\pi)\cdot\exp\left(\Tr(S\pi)/\rank(\pi)\right).
 \end{equation}
\end{proposition}
\begin{proof}
  Let $S=\sum_\lambda \lambda\cdot\pi_\lambda$ be the spectral decomposition of $S$. Then 
each of the idempotents $\pi_\lambda$ is self-adjoint and 
$\exp(S)=\sum_\lambda \exp(\lambda)\cdot\pi_\lambda$. One computes
\begin{equation}\label{calcul}
\Tr(S\pi)=\sum_\lambda\lambda\cdot\Tr(\pi_\lambda\pi)\text{ and }
\Tr(\exp(S)\pi)=\sum_\lambda\exp(\lambda)\cdot\Tr(\pi_\lambda\pi).
\end{equation}
Since $\pi_\lambda$ and $\pi$ are self-adjoints and idempotents,
\begin{equation}\label{positivity}
\Tr(\pi_\lambda\pi)=\Tr(\pi_\lambda\pi_\lambda\pi\pi)=\Tr(\pi\pi_\lambda\pi_\lambda\pi)
=\Tr((\pi_\lambda\pi)^{\mathrm{T}}\pi_\lambda\pi)\geq 0
\end{equation}
by idempotence of $\pi_\lambda$ and $\pi$, by cyclicity of $\Tr$, by self-adjointness of 
$\pi_\lambda$ and $\pi$, and by positivity of $X\mapsto\Tr(X^{\mathrm{T}}X)$ respectively.

The sum $\sum_\lambda\Tr(\pi_\lambda\pi)$ has value $\Tr(\Id\pi)=\Tr(\pi)=\rank(\pi)$. The 
coefficients $\frac{\Tr(\pi_\lambda\pi)}{\rank(\pi)}$ are well defined, because $\pi$ is assumed to 
be non zero, they are nonnegative, by~(\ref{positivity}), and  they have sum $1$, as 
$\sum_\lambda\Tr(\pi_\lambda\pi)=\rank(\pi)$. From the convexity of the exponential function, one gets
\begin{equation}
\exp\left(\sum_\lambda\lambda\cdot\frac{\Tr(\pi_\lambda\pi)}{\rank(\pi)}\right)\leq
\sum_\lambda\exp(\lambda)\cdot\frac{\Tr(\pi_\lambda\pi)}{\rank(\pi)},
\end{equation}
which, together with (\ref{calcul}), yields inequality~(\ref{convexity}).\end{proof}
\begin{corollary} \label{coroeffective}In situation of Theorem \ref{effective}, for any $p$ in $P$, 
$\pi_\lie{z}(\rho(p))$ is expanding:
\begin{equation}\label{expanding}
\forall v \in V, \forall p\in P, \Nm{\pi_\lie{z}(\rho(p))(v)}\geq \Nm{v}.
\end{equation}
\end{corollary}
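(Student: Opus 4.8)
The plan is to deduce Corollary~\ref{coroeffective} directly from Theorem~\ref{effective} by a standard spectral argument for positive definite self-adjoint operators. First I would record that, by Theorem~\ref{effective}, the operator $A=\pi_\lie{z}(\rho(p))$ is self-adjoint and positive definite on the euclidean space $V$, with every eigenvalue $\geq 1$. Since $A$ is self-adjoint, the spectral theorem gives an orthonormal eigenbasis $e_1,\dots,e_m$ of $V$ with $Ae_i=\lambda_i e_i$ and $\lambda_i\geq 1$ for all $i$.

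Next I would expand an arbitrary $v\in V$ in this basis as $v=\sum_i c_i e_i$, so that $Av=\sum_i \lambda_i c_i e_i$ and, by orthonormality,
\begin{equation*}
\Nm{Av}^2=\sum_i \lambda_i^2\,\abs{c_i}^2\geq\sum_i\abs{c_i}^2=\Nm{v}^2,
\end{equation*}
using $\lambda_i^2\geq 1$. Taking square roots yields $\Nm{\pi_\lie{z}(\rho(p))(v)}\geq\Nm{v}$, which is exactly~(\ref{expanding}); since $p\in P$ and $v\in V$ were arbitrary, this proves the corollary.

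There is really no serious obstacle here: the only point that needs care is to make sure the euclidean structure used is the one fixed in~\ref{relativenotations}, with respect to which $\pi_\lie{z}$ is the \emph{orthogonal} projection and $\theta_V$ is the corresponding adjunction, so that ``self-adjoint'' in the statement of Theorem~\ref{effective} means self-adjoint for this same inner product $\Nm{-}$. Once that is fixed, the spectral theorem applies verbatim and the estimate $\lambda_i\geq 1\Rightarrow\lambda_i^2\geq 1$ is immediate. One may note in passing that the same computation shows $\pi_\lie{z}(\rho(p))$ is invertible with $\Nm{\pi_\lie{z}(\rho(p))^{-1}}\leq 1$, though only the expanding direction is needed in the sequel.
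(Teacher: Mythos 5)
Your proof is correct and is exactly the paper's one-line argument, merely spelled out: the paper also observes that $\pi_\lie{z}(\rho(p))$ is diagonalizable in an orthonormal basis with all diagonal entries $\geq 1$ and concludes. Your extra care about using the euclidean structure fixed in~\ref{relativenotations} is apt but implicit in the paper.
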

\begin{proof} Indeed $\pi_\lie{z}(\rho(p))$ can be diagonalized in an orthonormal basis with all 
diagonal coefficients greater than or equal to $1$.
\end{proof}

\subsection{Application} 
 Consider the adjoint representation $\Ad_\rho$ of $G$ on $\lie{gl}(V)$ by conjugation. Let 
$C(\Ad_\rho)$ be the vector space of functions on $H$ generated by the matrix coefficients of 
$\Ad_\rho$; that is, by functions 
$\mathopen\langle\phi,g{\mathclose\rangle}:h\mapsto\phi(\rho(h)g\rho(h)^{-1})$ for $g$ in 
$\lie{gl}(V)$ and $\phi$ in its algebraic dual $\lie{gl}(V)^\vee$. The 
function $\mathopen\langle\phi,g{\mathclose\rangle}$ depends linearly on both $g$ and $\phi$. 
Consequently $C(\Ad_\rho)$ is finite dimensional: its dimension is bounded by 
$\dim\left(\lie{gl}(V)\tens \lie{gl}(V)^\vee\right)$.

 The following identity has two consequences (cf.~\cite[\S7~N$^{0}$1]{B}):
  \begin{equation}\label{action1}
  \mathopen\langle\phi,g{\mathclose\rangle}(h\cdot h')
  =\mathopen\langle\phi,\rho(h')g\rho(h'^{-1}){\mathclose\rangle}(h).
  \end{equation}
Firstly the space of matrix coefficients of $\Ad_\rho$ is stable under the action of $H$ by 
translation: as a result we get a linear action of $H$ on $C(\Ad_\rho)$. Secondly~(\ref{action1}) 
expresses that, for a fixed $\phi$ in $\lie{gl}(V)^\vee$ the map 
$g\mapsto{\mathopen\langle}\phi,g{\mathclose\rangle}$ is an equivariant morphism from 
$\lie{gl}(V)$ to $C(\Ad_\rho)$.

 Recall that $H$ being \emph{reductive in $G$}, by criterion~(b) of \S\ref{conventions} the 
restriction to $H$ of a finite dimensional representation of $G$ is semisimple. Consequently, in 
both $\lie{gl}(V)$ and $C(\Ad_\rho)$ there is a unique isotypic projection onto the subspaces of 
invariant elements, namely onto the centralizer $\lie{z}$ of $H$ in $\lie{gl}(V)$, and onto the 
subset of constant functions (canonically isomorphic to $\{0\}$ or $\R$ according to $\dim(V)$ 
being zero or not\footnote{If $\dim(V)\neq0$, then $\langle\Tr,\Id\rangle: h\mapsto\dim(V)$ is a 
nonzero constant matrix coefficient.}). Moreover these projections, say $\pi_\lie{z}$ and $\pi_\R$ 
respectively,  commute with any equivariant morphism. In particular, for the morphism $g\mapsto
\langle\phi,g\rangle$ from $\lie{gl}(V)$ to $C(\Ad_\rho)$, for any $\phi$ in $\lie{gl}(V)^\vee$, we 
get
\begin{equation}\label{action}
\pi_\R\left({\mathopen\langle}\phi,g{\mathclose\rangle}\right)
={\mathopen\langle}\phi,\pi_\lie{z}(g){\mathclose\rangle};
\end{equation}
note that this is a constant function on $H$.

\begin{theorem}\label{corollary}
Let $C(\Ad_\rho)$ the vector space of functions on $H$ generated by the matrix coefficients of 
$\Ad_\rho$, let $\Omega$ be a nonempty subset of $H$, write $\pi_\R$ for the equivariant 
projection of $C(\Ad_\rho)$ onto constant functions. 
Set 
\begin{equation} \label{eq:c}
c=\sup \{\pi_\R(f) | f\in C(\Ad_\rho), \sup_{\omega\in\Omega}\abs{f(w)}\leq1\}
\end{equation}
so that, for any $f$ in $C(\Ad_\rho)$, we have: 
\begin{equation} \label{eq:c-piR}
\sup_{\omega\in\Omega}\abs{f(\omega)}\cdot c\geq \abs{\pi_\R(f)}.
\end{equation}
Then
\begin{equation}\label{ineq}
\forall g\in \lie{gl}(V),\forall v \in V, 
\sup_{\omega\in\Omega} \Nm{\rho\left(\omega^{-1}\right) 
\cdot g \cdot \rho\left(\omega\right)(v)}\cdot c \geq \Nm{\pi_\lie{z}(g)(v)}.
\end{equation}
\end{theorem}
\begin{proof}
The remark \eqref{eq:c-piR} follows from the homogeneity of $\pi_\R$. 

Fix $g$ in $\lie{gl}(V)$, $v$ in $V$, and denote by $w$ the vector $\pi_\lie{z}(g)(v)$. Applying 
Cauchy-Schwarz inequality in $V$, we get, for any $\omega$ in $H$,
\begin{equation}\label{A}
\Nm{\rho\left(\omega^{-1}\right) \cdot g \cdot \rho\left(\omega\right)(v)}\cdot \Nm{w}
\geq \left(\rho\left(\omega^{-1}\right) \cdot g \cdot \rho\left(\omega\right)(v) \mid  w\right),
\end{equation}
where $(\cdot\mid \cdot)$ denotes the inner product on $V$. 
Note that the right-hand side, as a function of $\omega$, is a matrix coefficient belonging to 
$C(\Ad_\rho)$. Consequently, by definition of $c$,
\begin{equation}\label{B}
\sup_{\omega\in\Omega}\left(\rho\left(\omega^{-1}\right) \cdot g \cdot 
\rho\left(\omega\right)(v) \mid  w \right)\cdot c
\geq
\pi_\R
\Bigl(\left(\rho\left(\omega^{-1}\right) \cdot g \cdot \rho\left(\omega\right)(v) \mid w\right)\Bigr).
\end{equation}
Equation~\eqref{action} with $\phi:X\mapsto(X(v)\mid w)$ specializes in 
\begin{equation}\label{C}
\pi_\R\Bigl(\left(\rho\left(\omega^{-1}\right) \cdot g \cdot \rho\left(\omega\right)(v) \mid  w \right)
\Bigr)
=\left(\pi_\lie{z}(g)(v) \mid  w \right) =\Nm{w}^2.
\end{equation}
Applying $\sup_{\omega\in\Omega}$ to both sides of~(\ref{A}), combining with~(\ref{B}), 
substituting~(\ref{C}), we finally get 
\begin{equation}\label{D}\sup_{\omega\in\Omega} \Nm{\rho(\omega^{-1}) \cdot g \cdot 
\rho(\omega)(v)}\cdot\Nm{w}\cdot c\geq  \Nm{w}^2,\end{equation}
which implies (\ref{ineq}), as $\Nm{w}\geq0$.
\end{proof}

\begin{corollary}\label{constant} In the situation of Theorem \ref{corollary}, assume moreover that
\begin{enumerate}
 \item[$(*)$]\label{star} every matrix coefficient in $C(\Ad_\rho)$ that vanishes on $\Omega$ also 
vanishes on the whole $H$.
\end{enumerate}
Assuming $(*)$ and $\dim(V)>0$, we get $1\leq c\neq \infty$, and 
\begin{equation}\label{ineqP}
\forall p\in \exp_G(\lie{p}),\forall v \in V, \sup_{\omega\in\Omega} \Nm{\rho\left(\omega^{-1} \cdot 
p \cdot \omega\right)(v)} \geq \Nm{v}/c.
\end{equation}
\end{corollary}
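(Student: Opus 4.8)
Corollary~\ref{constant} is a short deduction combining Theorem~\ref{corollary}, Corollary~\ref{coroeffective}, and a compactness argument for the constant~$c$. First I would establish that, under hypothesis~$(*)$, the number $c$ is finite and at least~$1$. The finiteness comes from the fact that $C(\Ad_\rho)$ is \emph{finite dimensional} (as explained right before Theorem~\ref{corollary}), so the linear functional $f\mapsto\pi_\R(f)$ is bounded on the unit ball of the seminorm $f\mapsto\sup_{\omega\in\Omega}\abs{f(\omega)}$ \emph{provided} that seminorm is actually a norm; and hypothesis~$(*)$ says precisely that $\sup_{\omega\in\Omega}\abs{f(\omega)}=0$ forces $f\equiv0$, so it is a norm on a finite dimensional space, whence $c<\infty$. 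That $c\geq1$ follows by testing $c$ on the single constant function $f\equiv1$, which lies in $C(\Ad_\rho)$ exactly when $\dim(V)>0$ (via $\langle\Tr,\Id\rangle/\dim(V)$, cf.\ the footnote): it satisfies $\sup_\Omega\abs{f}=1$ and $\pi_\R(f)=1$, so $c\geq1$.

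\textbf{The main step.} Next I would apply inequality~(\ref{ineq}) of Theorem~\ref{corollary} with the specific choice $e=\rho(p)$ for $p\in\exp_G(\lie{p})=P$. Since $\rho(\omega p\omega^{-1})=\rho(\omega)\rho(p)\rho(\omega)^{-1}$, after relabelling $\omega\mapsto\omega^{-1}$ (which preserves $\sup_{\omega\in\Omega}$ only if $\Omega=\Omega^{-1}$ — so more carefully one observes that $\rho(\omega^{-1})e\rho(\omega)$ with $e=\rho(p)$ is exactly $\rho(\omega^{-1}p\omega)$, matching the left-hand side of~(\ref{ineqP}) literally). Thus~(\ref{ineq}) reads
\begin{equation*}
\sup_{\omega\in\Omega}\Nm{\rho(\omega^{-1}p\omega)(v)}\cdot c\geq\Nm{\pi_\lie{z}(\rho(p))(v)}.
\end{equation*}
Now invoke Corollary~\ref{coroeffective}: for $p\in P$ the endomorphism $\pi_\lie{z}(\rho(p))$ is expanding, i.e.\ $\Nm{\pi_\lie{z}(\rho(p))(v)}\geq\Nm{v}$ for all $v$. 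Combining the two displayed facts gives $\sup_{\omega\in\Omega}\Nm{\rho(\omega^{-1}p\omega)(v)}\cdot c\geq\Nm{v}$, which upon dividing by $c>0$ is exactly~(\ref{ineqP}).

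\textbf{Where the work actually sits.} The only genuinely substantive ingredient here is Corollary~\ref{coroeffective} (hence Theorem~\ref{effective} and Proposition~\ref{cvx}), which is already proved; everything in Corollary~\ref{constant} itself is bookkeeping. So the ``main obstacle'' is purely a matter of care rather than depth: one must check that hypothesis~$(*)$ is genuinely what makes the seminorm a norm (it is, verbatim), that the footnoted constant matrix coefficient lies in $C(\Ad_\rho)$ so that $c\geq1$ when $\dim(V)>0$, and that the substitution $e=\rho(p)$ correctly turns the conjugation-adjoint coefficient into the orbit expression $\rho(\omega^{-1}p\omega)(v)$. I would also remark that $(*)$ holds whenever $\Omega$ has nonempty interior in $H$ (a nonzero real-analytic function on a connected Lie group cannot vanish on an open set), and whenever $\Omega$ is Zariski dense in case $H$ is linear — which is how this corollary will be used to recover Theorem~\ref{lemma}.
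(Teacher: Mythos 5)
Your proposal is correct and follows the paper's own argument essentially verbatim: $c\geq1$ by testing the constant function $\langle\Tr,\Id\rangle/\dim(V)$, $c<\infty$ by finite-dimensionality once $(*)$ promotes the seminorm to a norm, and then~(\ref{ineqP}) by specialising~(\ref{ineq}) to $e=\rho(p)$ and chaining with the expanding property from Theorem~\ref{effective}/Corollary~\ref{coroeffective}. Your parenthetical self-correction is apt — no relabelling of $\omega$ is needed, since $\rho(\omega^{-1})\rho(p)\rho(\omega)=\rho(\omega^{-1}p\omega)$ already gives the left-hand side of~(\ref{ineqP}) literally.
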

\begin{proof}
 Assuming $\dim(V)>0$, constant functions are matrix coefficients of $\Ad_\rho$, hence by 
\eqref{eq:c},  $c\geq\pi_\R(1)=1$. Condition $(*)$ ensure that the map $f\mapsto \sup_{\omega
\in\Omega}\abs{f(w)}$ actually defines a \emph{norm}, instead of a mere semi-norm, on the 
subspace of $C(\Ad_\rho)$ on which it takes finite values. By \eqref{eq:c},  $c$~is the operator 
norm of the restriction to this subspace of the bounded linear application $\pi_\R$. Whence 
$c<\infty$.
 
 The inequality (\ref{ineqP}) follows from combining~\ref{corollary} and~\ref{effective}, and then 
dividing by $c>0$. 
\end{proof}

 \begin{remark} \label{rem:condstar} 
 \begin{enumerate} 
 \item Condition $(*)$ is satisfied for any Zariski dense subset $\Omega$ of $H$, and in 
particular\footnote{Recall $H$ is assumed to be connected, and being smooth, it is irreducible.} if 
$\Omega$ has nonempty interior or positive Haar measure. 
 \item If moreover $\Omega$ is bounded, then the map $f\mapsto \sup_{\omega\in\Omega}
\abs{f(w)}$ defines a norm on whole of $C(\Ad_\rho)$.  
 \item \label{itm:finite} Note that condition $(*)$ means that the evaluation maps $f\mapsto 
f(\omega)$, with $\omega$ in $\Omega$, generate the algebraic dual of $C(\Ad_\rho)$. 
Choosing a basis from this generating set, one can see that condition $(*)$ can still be met by 
replacing $\Omega$ by a subset of cardinality at most $\dim(C(\Ad_\rho))$. 
 \item Note that in terms of such a basis of $C(\Ad_\rho)$ (which can be obtained using a basis 
of $V$) one can effectively bound the constant $c$ from the above in Corollary~\ref{constant}.
 \end{enumerate}
\end{remark}

\section{Proof of Theorem \ref{lemma}}
We will show how to derive Theorem \ref{lemma} from Corollary~\ref{constant}. Actually we will 
establish the following more precise statement. The existence of $\Theta$ follows from 
criterion~(c) of \S\ref{conventions}. 

\begin{proposition}\label{proposition}In the situation of Theorem~\ref{lemma}, let $\Theta$ be a 
Cartan involution of $G$ under which $H$ is invariant, and let $K$ be the  maximal projectively 
compact subgroup of $G$ consisting of all the fixed points of $\Theta$. Write $\lie{g}$, 
$\lie{z}_G$, and $\lie{k}$ for the Lie algebras of $G$, $\Cent_G(H)$, and $K$ respectively. Let 
$\lie{p}=\left(\lie{k}+\lie{z}_G\right)^\bot$, the orthogonal complement  of the sum of $\lie{z}_G$ and 
$\lie{k}$ with respect to the Killing form on $\lie{g}$.

Then the subset $Y=K\cdot\exp_G(\lie{p})$ of $G$ satisfies both the conditions of 
Theorem~\ref{lemma}.
\end{proposition}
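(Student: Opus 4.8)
The plan is to verify the two conclusions of Theorem~\ref{lemma} separately for the explicitly given set $Y=K\cdot\exp_G(\lie{p})$: conclusion~1) is obtained from Mostow's decomposition theorem (Theorem~\ref{Mostow}) applied to $L:=\Cent_G(H)^0$, and conclusion~2) is a short argument built on Corollary~\ref{constant}. For conclusion~1), one first checks that $L$ is a connected reductive subgroup of $G$ in the sense of the conventions which is stable under $\Theta$: its Lie algebra $\lie{z}_G$ is $\theta$-stable (remark~\ref{remarks}), hence acts semisimply on $\lie{g}$ by the criterion proved in~\ref{proofc}, and $\Theta$ normalises $\Cent_G(H)$ and therefore its identity component. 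Since $\mathrm{Lie}(L)=\lie{z}_G$, the orthocomplement $(\lie{k}+\lie{z}_G)^\bot=\lie{p}$ is exactly the subspace attached to the pair $(G,L)$ by Mostow's decomposition, so Theorem~\ref{Mostow} gives $G=K\cdot\exp_G(\lie{p})\cdot\Cent_G(H)^0=Y\cdot\Cent_G(H)^0$, which is conclusion~1). This is the one non-formal step: the geometry — that the section $\exp_G(\lie{p})\cdot eK$ meets every $L$-orbit in $G/K$ — is packaged inside Theorem~\ref{Mostow}, and the only thing left to do is to match the normalisations of $K$, $\lie{p}$, $L$ fixed in~\ref{generalnotations} with the hypotheses and output of that theorem; this matching, rather than any computation, is the main obstacle.

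For conclusion~2) we first make harmless reductions. If $\dim V=0$ there is nothing to prove, so assume $\dim V>0$; since the statement is up to a multiplicative constant and all norms on the finite dimensional $V$ are equivalent, we may take $\Nm{\cdot}$ to be the euclidean norm fixed in~\ref{relativenotations}; and since replacing $\Omega$ by a relatively compact open subset can only shrink the nonnegative quantity $\sup_{\omega\in\Omega}\Nm{\rho(y\omega)(v)}$, we may assume $\Omega$ is bounded with nonempty interior. Then condition~$(*)$ of Corollary~\ref{constant} holds, and $\rho(\Omega^{-1})$ is relatively compact in $\GL(V)$. Moreover $\theta_V$ is the negative of the euclidean adjunction and $\d\rho$ intertwines $\theta$ with $\theta_V$ (see~\ref{relativenotations} and~\ref{remarks}), so the operators $\d\rho(X)$ with $X\in\lie{k}$ are skew-adjoint; hence the connected group $\rho(K)$ lies in the orthogonal group of $(V,\Nm{\cdot})$ and acts by isometries. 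Combining this with the boundedness of $\rho(\Omega^{-1})$, there is a constant $M<\infty$ with $\Nm{\rho(\omega)^{-1}\rho(k)^{-1}w}\le M\,\Nm{w}$ for all $\omega\in\Omega$, $k\in K$, $w\in V$. (The relative compactness of $\rho(K)$, which rests on the self-adjointness properties of the euclidean structure of~\ref{relativenotations} and not on the projective compactness of $K$ alone, is the only point in part~2) that is not purely formal.)

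Now fix $y\in Y$ and write $y=k\cdot p$ with $k\in K$ and $p\in\exp_G(\lie{p})$. For $\omega\in\Omega$ one has $y\omega=k\,\omega\,(\omega^{-1}p\,\omega)$, so $\rho(\omega^{-1}p\,\omega)(v)=\rho(\omega)^{-1}\rho(k)^{-1}\rho(y\omega)(v)$ and hence $\Nm{\rho(\omega^{-1}p\,\omega)(v)}\le M\,\Nm{\rho(y\omega)(v)}$ for every $v\in V$. Taking the supremum over $\omega\in\Omega$ and applying inequality~(\ref{ineqP}) of Corollary~\ref{constant} (legitimate since $(*)$ holds and $\dim V>0$) yields
\begin{equation*}
M\cdot\sup_{\omega\in\Omega}\Nm{\rho(y\omega)(v)}\;\ge\;\sup_{\omega\in\Omega}\Nm{\rho(\omega^{-1}p\,\omega)(v)}\;\ge\;\Nm{v}/c,
\end{equation*}
where $c\ge1$ is the finite constant of Corollary~\ref{constant}. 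Thus~(\ref{ineqlemma}) holds with constant $(cM)^{-1}$, up to the norm-equivalence factors absorbed in the reductions, which is the effective value alluded to in~\ref{subsec:lemma}. The only genuine difficulty throughout is the geometric input of conclusion~1) encapsulated in Theorem~\ref{Mostow}; conclusion~2) is then pure bookkeeping on top of Corollary~\ref{constant}.
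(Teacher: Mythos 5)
Your proof is correct and takes the same route as the paper: Mostow's Theorem~\ref{Mostow} applied with $\lie{z}=\lie{z}_G$ yields condition~1), and condition~2) is obtained from Corollary~\ref{constant} together with $K$-invariance of the euclidean norm of~\ref{relativenotations} and boundedness of $\rho(\Omega^{-1})$ (the paper's constant is likewise $1/(Cc')$ with $C=\sup_{\omega\in\Omega}\NM{\rho(\omega^{-1})}$). Be aware, though, that Mostow's middle factor is literally $\lie{k}^\bot\cap(\lie{k}^\bot\cap\lie{z}_G)^\bot$, which coincides with your $\lie{p}=\lie{k}^\bot\cap\lie{z}_G^\bot$ exactly because $\lie{z}_G$ is $\theta$-stable, giving $\lie{z}_G=(\lie{k}\cap\lie{z}_G)\oplus(\lie{k}^\bot\cap\lie{z}_G)$; you record the $\theta$-stability but should make this identification explicit, as it (rather than legitimating the application of Theorem~\ref{Mostow}, which needs no hypothesis on $\lie{z}$) is the precise place where the choice of $K$ adapted to $H$ enters condition~1).
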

We will prove that $Y$ satisfies each of these conditions in the next two subsections. 
\subsection{The first condition}
We recall another theorem due to G.D.~Mostow.

\begin{theorem}[{Mostow~\cite[Theorem~5]{Mostow}}]\label{Mostow}
 Let $G$ be a connected semi-simple real Lie group and let $K$ be a maximal projectively  
compact subgroup of $G$. Let $\lie{g}$ denote the Lie algebra of $G$ and $\lie{k}$ the Lie 
algebra of $K$. Let $\lie{z}$ be any Lie subalgebra of $\lie{g}$. Orthogonality is understood with 
respect to the Killing form.

 Then the following application is a diffeomorphism.
\begin{equation}\label{Mostowmap}
\begin{array}{ccc}
K\times
\left(\lie{k}^\bot\cap(\lie{k}^\bot\cap\lie{z})^\bot\right)
\times\left(\lie{k}^\bot\cap\lie{z}\right) & \to & G \\
(k,Q,Z) & \mapsto & k\cdot \exp_G(Q) \cdot \exp_G(Z)
\end{array}
\end{equation}
\end{theorem}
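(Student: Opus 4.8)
The plan is to split off the compact factor $K$ by an elementary manipulation with the Cartan involution $\Theta$, reduce to a decomposition statement on the space $\mathcal{P}:=\exp_G(\lie{p})$ of ``positive'' elements (where $\lie{p}:=\lie{k}^\bot$, so that $\mathcal{P}\cong G/K$), and prove that statement on $\mathcal{P}$ by a strict-convexity argument. Throughout set $\lie{p}_1:=\lie{k}^\bot\cap\lie{z}$ and $\lie{p}_0:=\lie{k}^\bot\cap(\lie{k}^\bot\cap\lie{z})^\bot=\lie{p}\cap\lie{p}_1^\bot$, so $\lie{p}=\lie{p}_0\oplus\lie{p}_1$ orthogonally for $B_\theta$. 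Consider $\sigma\colon G\to\mathcal{P}$, $\sigma(g)=\Theta(g)^{-1}g$ (this lands in $\mathcal{P}$, and $\sigma(g_1)=\sigma(g_2)$ iff $g_1,g_2$ lie in a common coset $Kg$, since $K=G^\Theta$). Using $\Theta(\exp_G X)=\exp_G(-X)$ for $X\in\lie{p}$ one gets $\sigma\bigl(k\exp_G(P)\exp_G(Z)\bigr)=\exp_G(Z)\exp_G(2P)\exp_G(Z)$ for $k\in K$. Since moreover $K\times\mathcal{P}\to G$, $(k,q)\mapsto kq^{1/2}$, is a diffeomorphism (inverse $g\mapsto(g\,\sigma(g)^{-1/2},\sigma(g))$), one checks that~(\ref{Mostowmap}) is a diffeomorphism if and only if
$$\nu\colon\lie{p}_1\times\lie{p}_0\longrightarrow\mathcal{P},\qquad \nu(Z,W)=\exp_G(Z)\,\exp_G(W)\,\exp_G(Z),$$
is one. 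Note that $\nu(Z,W)=\exp_G(Z)\cdot\exp_G(W)$ for the isometric, transitive congruence action $g\cdot q=g\,q\,\Theta(g)^{-1}$ of $G$ on $\mathcal{P}\cong G/K$.

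\emph{Geometry.} The metric induced by $B_\theta|_{\lie{p}}$ makes $\mathcal{P}$ a complete, simply connected, nonpositively curved manifold, the geodesic from the basepoint $\Id=\exp_G(0)$ to $\exp_G(Y)$ (for $Y\in\lie{p}$) being $t\mapsto\exp_G(tY)$, of initial velocity $Y$. Since $\lie{z}$ is a subalgebra, $\lie{p}_1$ is a Lie triple system ($[\lie{p}_1,\lie{p}_1]\subseteq\lie{k}\cap\lie{z}$ and $[\lie{k}\cap\lie{z},\lie{p}_1]\subseteq\lie{p}\cap\lie{z}=\lie{p}_1$), so $\mathcal{P}_1:=\exp_G(\lie{p}_1)$ is a complete totally geodesic submanifold with $T_{\Id}\mathcal{P}_1=\lie{p}_1$, invariant under congruence by $\exp_G(Z)$ for all $Z\in\lie{p}_1$.

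\emph{Bijectivity and smoothness of $\nu$.} Fix $Q\in\mathcal{P}$. Since $\Theta(\exp_G Z)^{-1}=\exp_G Z$, solving $\nu(Z,W)=Q$ for $W\in\lie{p}_0$ amounts to $\exp_G(W)=\exp_G(-Z)\cdot Q$, i.e.\ (this element being automatically in $\mathcal{P}$) to $\log\bigl(\exp_G(-Z)\cdot Q\bigr)\perp\lie{p}_1$, i.e.\ to: the geodesic from $\Id$ to $\exp_G(-Z)\cdot Q$ leaves $\mathcal{P}_1$ orthogonally at $\Id$. Transporting by the isometry $q\mapsto\exp_G(Z)\cdot q$ (which fixes $\mathcal{P}_1$, sends $\Id$ to $\exp_G(2Z)$ and $\exp_G(-Z)\cdot Q$ to $Q$), this is equivalent to: $Z$ is a critical point of $Z\mapsto d_{\mathcal{P}}(\exp_G(2Z),Q)^2$ on $\lie{p}_1$, i.e.\ $\exp_G(2Z)$ is a foot of a perpendicular from $Q$ to $\mathcal{P}_1$. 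As $\mathcal{P}_1$ is complete and totally geodesic in the Hadamard space $\mathcal{P}$, it is closed and convex, so $q\mapsto d_{\mathcal{P}}(q,Q)^2$ is strictly convex on $\mathcal{P}_1$ and has a unique critical point (its minimum, the nearest point of $\mathcal{P}_1$ to $Q$); this pins down $Z$, hence $W$, so $\nu$ is a bijection. Its inverse is smooth by the implicit function theorem: at the minimizer the Hessian of $Z\mapsto d_{\mathcal{P}}(\exp_G(2Z),Q)^2$ is nondegenerate, because in nonpositive curvature $\mathrm{Hess}\,d_{\mathcal{P}}(\cdot,Q)^2\succeq 2g$, whose restriction to the totally geodesic $\mathcal{P}_1$ is positive definite, and vanishing of the gradient at the critical point carries this over to the $Z$-coordinate; then $W=\log(\exp_G(-Z)\cdot Q)$ is smooth in $Q$ as well. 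Hence $\nu$, and therefore~(\ref{Mostowmap}), is a diffeomorphism.

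\emph{Expected obstacle.} The reduction step is just bookkeeping with $\Theta$, but one has to be careful to see the $K$-factor really split off. The substance is the third step, which leans on several standard-but-not-quite-trivial facts about the symmetric space $G/K$ — that it is Hadamard, that a complete totally geodesic submanifold of a Hadamard space is closed and convex with strictly convex (indeed $\mathrm{Hess}\succeq 2g$) squared-distance functions, hence with unique, second-order-nondegenerate nearest points — and on the Lie-triple-system computation correctly positioning $\mathcal{P}_1$ inside $\mathcal{P}$. An alternative, more in the spirit of Proposition~\ref{cvx}, would be to compute $d\nu$ directly and use that $\ad X$ is $B_\theta$-self-adjoint for $X\in\lie{p}$, so that $\frac{\Id-e^{-\ad X}}{\ad X}$ is positive definite; but the convexity route above looks cleaner.
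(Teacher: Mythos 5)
Your argument is correct, but it takes a genuinely different route from the paper. The paper treats bijectivity and continuity as given, by citing Mostow's own statement that (\ref{Mostowmap}) ``decomposes $G$ topologically,'' and then only supplies the missing smoothness: it checks directly that the map is an immersion by showing $\Ad_{p^{-1}}(\lie{k})$, $\lie{k}^\bot\cap(\lie{k}^\bot\cap\lie{z})^\bot$ and $\lie{k}^\bot\cap\lie{z}$ are in direct sum, using nothing more than the sign of the Killing form on $\lie{k}$ versus $\lie{k}^\bot$ and its invariance under $\Ad$. Equal dimensions then make it a local diffeomorphism, hence (being bijective) a global one. You instead give a self-contained proof of the whole theorem: you split off the $K$-factor via $\sigma(g)=\Theta(g)^{-1}g$, reduce to $\nu(Z,W)=\exp_G(Z)\exp_G(W)\exp_G(Z)$ on the symmetric space $\mathcal P=G/K$, recognise $\exp_G(\lie{p}_1)$ as a totally geodesic (Lie triple system) submanifold, and deduce existence, uniqueness and nondegeneracy of the foot of the perpendicular from the CAT(0) geometry of Hadamard manifolds. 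What you buy is independence from Mostow's result, at the cost of invoking more machinery (completeness and nonpositive curvature of $G/K$, strict convexity and $\mathrm{Hess}\succeq 2g$ for squared distance to a convex set); the paper's treatment is considerably shorter because it takes the hard part (bijectivity) off the shelf. Your closing ``alternative'' remark -- computing $d\nu$ and using $B_\theta$-self-adjointness of $\ad X$ for $X\in\lie{p}$ -- is actually closer in flavour to the paper's argument, though the paper checks the differential of the original map (\ref{Mostowmap}) rather than of $\nu$.
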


Mostow states that $G$ ``decomposes topologically'', meaning that we have a homeomorphism. 
This is enough to establish the first condition of Theorem~\ref{lemma}, but we can verify this 
directly, as below, that map (\ref{Mostowmap}) is an immersion. As both sides 
of~(\ref{Mostowmap}) have equal dimension, (\ref{Mostowmap}) will be a local diffeomorphism, 
but being bijective, it will be an (analytic) diffeomorphism.
Let us prove that at each $(k,Q,Z)$ in 
$K\times\left(\lie{k}^\bot\cap(\lie{k}^\bot\cap\lie{z})^\bot\right)
\times\left(\lie{k}^\bot\cap\lie{z}\right)$ the tangent map is injective. 
\begin{proof}
Left and right translating one is reduced to the case where $Z=0$ and $k=\exp_G(0)$. Write $q=
\exp_G(Q)$, and let $dK$, $dQ$, and $dZ$ be arbitrarily small in $\lie{k}$, $\lie{p}$, and $\lie{k}^
\bot\cap\lie{z}$ respectively. 
 At first order,
 \begin{align*}\exp_G(dK)\exp_G(Q+{dQ})&\exp_G(dZ)\sim \\
 & q\cdot (q^{-1}\exp_G(dK)q)\exp_G(dQ)\exp_G(dZ).
 \end{align*}
 The latter equals $q\cdot\exp_G(\Ad_{q^{-1}}(dK))\exp_G(dQ)\exp_G(dZ)$, or, up to first order,
 $$q\cdot\exp_G(\Ad_{q^{-1}}(dK)+dQ+dZ).$$
  We will be done showing that $\Ad_{q^{-1}}(dK)+dQ+dZ$ cannot be zero for arbitrarily small 
and not simultaneously zero $dK$, $dQ$ and $dZ$, namely that $\Ad_{q^{-1}}(\lie{k})$, $
\left(\lie{k}^\bot\cap(\lie{k}^\bot\cap\lie{z})^\bot\right)$ and $\lie{k}^\bot\cap\lie{z}$ are in direct 
sum. Note that $\lie{k}$ and $\lie{k}^\bot$ are anisotropic of opposite sign (negative and positive 
resp.). By invariance of the Killing form, $\Ad(\exp_G(Q))(\lie{k})$ is negative 
(\cf~\ref{Killinginvariant}), hence has intersection $\{0\}$ with $\lie{k}^\bot$. Consequently 
$\Ad_{q^{-1}}(\lie{k})$ and $\lie{k}^\bot$ are in direct sum. As $\lie{k}^\bot$ is anisotropic, 
$\lie{z}\cap\lie{k}^\bot$ is supplementary to its orthogonal complement in $\lie{k}^\bot$.
 \end{proof}

Let $K$ and $Y$ be as in Proposition~\ref{proposition}. Applying Theorem~\ref{Mostow} to 
$\lie{z}=\lie{z_g}$, it follows that the equality $G=Y\cdot \Cent_G(H)^0$ is satisfied and that $Y$ 
defines a closed submanifold of $G$. In particular $Y$ satisfies condition $1$ of 
Theorem~\ref{lemma}.

\subsection{The second condition}
What is left, in order to prove Proposition \ref{proposition}, is  to show that $Y$ satisfies the 
condition~\eqref{ineqlemma} of the Theorem~\ref{lemma}. Fix $\rho$ as in 
Theorem~\ref{lemma}. We will prove~\eqref{ineqlemma} under a weaker hypothesis  on $\Omega$, namely the condition $(*)$ stated in Corollary~\ref{constant}.

\begin{proof}
If $\dim(V)=0$, then \eqref{ineqlemma} is immediate. So assume $\dim(V)>0$.

Note that it is enough to prove the inequality \eqref{ineqlemma} for any subset $\Omega_b$ of 
$\Omega$ instead of $\Omega$. Moreover, according to Remark~\ref{rem:condstar} 
(\ref{itm:finite}), we can assume this subset to be finite and still satisfy condition $(*)$. In 
particular such an $\Omega_b$ will be bounded.

Because $V$ is finite dimensional, all norms on $V$ are equivalent. Consequently, the validity of 
the inequality \eqref{ineqlemma} doesn't depend on the chosen norm on $V$, if one allows to 
change the constant. In particular one can assume that this norm is associated to a euclidean 
structure on $V$ as in~\ref{subsec:euclidean}. Then the corresponding inner product is $K$-
invariant.

Recall (proposition \ref{proposition}) that $Y=K\cdot \exp_{G}{\lie{p}}$. As the euclidean norm on 
$V$ is $K$ invariant, the inequality~\eqref{ineqlemma} for $y$ in $Y$ will follow from inequality~
\eqref{ineqlemma} for $y\in \exp_{G}(\lie{p})$. 

We only need to prove that there exists a constant $c>0$ such that
\begin{equation}\label{truc}
    \forall y\in \exp_{G}(\lie{p}),\,\forall v \in V,\quad 
    \sup_{\omega\in\Omega} \Nm{\rho(y \cdot \omega)(v)} \geq c\cdot\Nm{v}.
\end{equation}
As $\Omega$ is bounded, $C=\sup_{\omega\in\Omega_b}\NM{\rho(\omega^{-1})}$ is finite, 
where $\NM{\cdot}$ denotes the operator norm, and because $\dim(V)>0$, $C>0$. Because of 
the inequalities 
$$\Nm{\rho\left(\omega^{-1}\cdot y\cdot\omega\right)v}\leq\NM{\rho\left(\omega^{-1}\right)}\cdot
\Nm{\rho\left(y\cdot\omega\right)v}\leq C\cdot\Nm{\rho(y\cdot\omega)v},$$
equation~(\ref{truc}) follows from
\begin{equation}\label{truc2}
    \forall y\in \exp_{G}(\lie{p}),\,\forall v \in V,\quad \sup_{\omega\in\Omega} 
\Nm{\rho(\omega^{-1}\cdot y \cdot \omega)(v)} \geq Cc\cdot\Nm{v}.
\end{equation}
Let $c'$ be the constant given by \eqref{eq:c}. According Corollary~\ref{constant}, 
equations~(\ref{truc2}), and hence~(\ref{truc}) hold for $c=\frac{1}{Cc'}$.
\end{proof}

\begin{remark} \label{rem:explicit-c}
Proposition~\ref{proposition} and Theorem~\ref{lemma} are proved, with $Y$ given by 
Proposition~\ref{proposition} (or \ref{generalnotations}), assuming only that $\Omega$ satisfies 
condition $(*)$ of Corollary~\ref{constant}, and, whenever the norm on $V$ is given by 
\ref{subsec:euclidean}, with 
\begin{equation}\label{effectiveconstant}
c=\left(\sup_{\omega\in\Omega_b}\NM{\rho(\omega^{-1})}\cdot\sup \{\pi_\R(f) | f\in C(\Ad_\rho), 
\sup_{\omega\in\Omega_b}\abs{f(w)}\leq1\}\right)^{{-1}}.
\end{equation}
\end{remark}

\section{Subsequent enhancements: case of reductive~$G$}\label{general}
Actually, Theorem~\ref{lemma} can be generalized in different ways. First of all, if~$G$ is a 
semisimple linear Lie group, one can consider the algebraic structure (given by the algebra of 
matrix coefficients). In this case the Theorem~\ref{lemma} and its proof remain true if one only 
assume that the Zariski closure of~$H$ is Zariski connected, instead of $H$ being connected as 
a Lie group.

More importantly, in a different direction,
\begin{proposition} \label{prop:reductiveG}
The conclusion of Theorem~\ref{lemma} holds under the following relaxed conditions on $G$ 
and $H$: Let $G$ be a\/ \emph{reductive Lie group}; that is, $G$ has finitely many components 
and the adjoint action of $\lie{g}$ on itself is completely reducible. And Let~$H$ to be a 
connected\/ \emph{reductive subgroup of}~$G$; that is, the adjoint action of~$H$ on~$\lie{g}$ is 
completely reducible. 
\end{proposition}
\begin{proof}
Note that, for any compact subset~$C$ of~$G$, if we replace~$Y$ by~$CY$, the conclusion of 
Theorem~\ref{lemma} still hold, up to a change in the constant~$c$. This remark shows that 
without loss of generality we may assume that $G$ is connected.

 Let~$Z$ denote the center of~$G$ and~$[G,G]$ be the derived subgroup of~$G$. Set 
$H'=(HZ)\cap[G,G]$.  Note that~$H'$ is reductive in~$[G,G]$, because it has the same action as 
$H$ on~$\lie{g}$ and because~$[\lie{g},\lie{g}]$ is invariant subspace of~$\lie{g}$. We can apply 
Theorem~\ref{lemma} to~$H'$ in~$[G,G]$, in order to get a subset~$Y'$ of~$[G,G]$. As $
\Cent_{G}(H)=\Cent_{G}(HZ)=\Cent_{[G,G]}(H')Z$, we have~$Y'\Cent_{G}(H)=Y'\Cent_{[G,G]}
(H')Z=[G,G]Z=G$. Thus~$Y$ as a subset of~$G$ satisfies the first condition of 
Theorem~\ref{lemma}, with respect to~$G$ and~$H$. 

Let us check that~$Y$ also satisfies the second condition of Theorem~\ref{lemma}, namely 
formula~(\ref{ineqlemma}). First note we can replace $\Omega$ by a bounded subset, then that, 
for any bounded subset~$C$ of~$Z$, formula~(\ref{ineqlemma}) still holds, up to a change in 
constant, if we replace~$\Omega$ by $\Omega C$, and conversely. Consequently we can 
replace~$H$ by~$HZ$ and assume~$\Omega$ to have nonempty interior in~$HZ$. Taking a 
smaller subset we can assume $\Omega$, which we assumed to be bounded, is a product in~
$G$ of subsets of~$[G,G]$ and~$Z$. Using the converse above, we can replace~$HZ$ by~$H'$ 
and assume~$\Omega$, to be contained in~$H'$ and have nonempty interior in~$H'$.
\end{proof}

\section{Proof of Theorem~\ref{theofocusing}}

We now turn to the proof of Theorem~\ref{theofocusing}. We will in fact derive it from 
Theorem~\ref{lemma}. This was inspired by an argument of Kempf~\cite{Kempf} for reducing 
$S$-instability to instability; Kempf credits Mumford for the argument.

We consider $(y_n)_{n\in\N}$, $\Omega$, $\rho$ and $v$ as in the statement and prove the 
equivalence.

\begin{remark} \label{rem:AB}
\begin{enumerate}
\item The veracity of each of property~(A) and property~(B) is independent of the 
choice of the subset $\Omega$ of $H$, provided it is bounded and has nonempty interior in $H$.

\item Concerning property~(B), we first remark that $F$ depends only on the subspace $\langle
\Omega v\rangle$ generated by $\Omega v$. This space is contained in $\langle H v\rangle$, 
and not in any proper subspace. Indeed, $\Omega$ can not be contained in the inverse image 
by $h \to h v$ of a proper subspace of $\langle H v\rangle$: this inverse image is a proper 
differential subvariety, and has empty interior. We proved property~(B) depends only on 
$\langle H v\rangle$ and not on a specific $\Omega$.

\item \label{itm:A1}
Concerning property~(A), we first choose a basis of $\langle\Omega v\rangle$ from the 
generating subset $\Omega v$. Consider any bounded subset $\Omega^\prime$ of $H$. Then 
$\Omega^\prime v$ is bounded in $\langle H v\rangle$. Consequently, the coefficients of 
$\omega^\prime v$ in chosen basis will remain bounded as $\omega^\prime$ ranges over 
$\Omega^\prime$.  Write $(e_i)_{0\leq i \leq N}$ for the basis. The vector $y_n \omega^\prime v$ 
will be written with the same bounded coefficients in the basis $(y_n e_i)_{0\leq i \leq N}$ as 
$\omega^\prime v$ in the basis $(e_i)_{0\leq i \leq N}$. 
Let us now assume property~(A) for $\Omega$. As each $e_i$ belongs to $\Omega v$, the 
sequences $(y_n e_i)_{n \in \N}$ will be bounded. Consequently, the sequences 
$(y_n \omega^\prime v)_{n \in \N}$,  which are finite linear combinations of the formers, with 
uniformly bounded coefficients, are uniformly bounded, as $\omega^\prime$ ranges over 
$\Omega^\prime$. This proves property~(A) for $\Omega^\prime$.

\item \label{rem:A} From the previous argument, we deduce that 
property~(A) is equivalent to each of the following two variants:
\begin{equation} \tag{A'} 
\text{For each }\omega \text{ in }\Omega,\text{ the sequence }(y_n \omega v)_{n\in\N}
\text{ is bounded in }V,
\end{equation}
\begin{equation} \tag{A''} 
\text{For each }w \text{ in }\langle H v\rangle,\text{ the sequence }(y_n w)_{n\in\N}
\text{ is bounded in }V.
\end{equation}
\end{enumerate}
\end{remark}

\begin{proof}[Proof of $(B) \Rightarrow (A)$] This implication is the easiest one to prove and  
does not need the knowledge of $(y_n)_{n\in\N}$ being in $Y$, or $G$ being semisimple.

Let $F$ denote the point-wise stabilizer $\Omega v$. Assuming~(B), we know there is some 
compact set $C$ in $G$ such that each $y_n$ belongs to $C F$. The image subset $\rho(C)$ in 
$\End(V)$ is compact because $\rho$ is continuous. On the other hand $\Omega$ is bounded, 
hence contained in a compact, for instance $\overline{\Omega}$. Again, $
\rho(\overline{\Omega})$ is compact in $\End(V)$. 

For every $n$ in $\N$, and every $\omega$ in $\Omega$, one has 
\begin{equation} \label{formF}
 y_n \omega v \in \rho(C) \rho(F) \rho(\overline{\Omega}) v.
\end{equation}
But $F$ acts trivially on $\rho(\Omega) v$, hence on $\overline{\rho(\Omega) v}$ because the 
fixed point subspace of $F$ is closed. But, by continuity of $\rho$, the closure 
$\overline{\rho(\Omega) v}$ contains $\overline{\rho(\Omega)} v$. In equation~\eqref{formF} 
above, one can then forget about the action of $F$, which acts trivially on $
\rho(\overline{\Omega}) v$. It remains:
\begin{equation} \notag
\forall n\in\N,\forall\omega\in\Omega, y_n \omega v \in \rho(C)\rho(\overline{\Omega}) v.
\end{equation}
But $\rho(C)$ and $\rho(\overline{\Omega})$ are compact, and so is $\rho(C)
\rho(\overline{\Omega}) v$. This proves the sought for uniform boundedness of property~(A).
\end{proof}

The second implication is more involved. To summarize our approach a few words, we first 
convert boundedness in~(A) into convergence in some auxiliary representation space (after 
passing to a subsequence). We then employ the clever idea due to Kempf for passing from $S$-
instability to $\{0\}$-instability using the following easily provable Lemma~1.1(b) of \cite{Kempf},  
to convert the situation of convergence toward any vector into convergence to $0$ in another 
auxiliary finite dimensional representation space. As the property property~(A) carries over to the 
new representation, we will be able to reduce everything to Theorem~\ref{lemma}.

\begin{proof}[Proof of  {\rm (A)} $\Rightarrow$ {\rm (B)}] {\em Assume by contradiction that {\rm (A)} holds, 
but not {\rm (B)}}. Consider the space of functions from $\Omega$ to $V$, and more specifically the 
$G$-invariant subspace $W$ generated by the function $f_v:\omega \mapsto \omega v$. We can 
realize $W$ as a $G$-subspace of  the finite dimensional space $\Hom(\langle f_v(\Omega) 
\rangle,V)$, where $G$-acts on the image space. Note that $f_{v}$ corresponds to the identity 
homomorphism of $\langle f_{v}(\Omega)\rangle$ in $V$. 

The $G$~action on $f_v$ is via point-wise translation on the values of $f_v$. These values span 
$\Omega v$. The stabilizer in $G$ of the function $f_v$ is then the point-wise stabilizer of 
$\Omega v$. We denote this stabilizer by $F$.

Since we assumed that (B) fails to hold, the sequence $(y_n)_{n\in\N}$ is not  bounded in $G$ 
modulo $F$ on the right. By passing to a subsequence, one may assume no subsequence of the 
sequence $(y_n)_{n\in\N}$ is  bounded in $G$ modulo $F$ on the right.

Let us assume that property~(A) holds (it then holds for any subsequence). In other words 
property~(A) tells that the vector $y_n f_v$ of $W$ can be bounded independently of $n$. 
Replacing  by a subsequence, one may assume that the sequence $(y_n f_v)_{n \in \N}$ is 
convergent in the finite dimensional vector space $W$. Let $f_\infty$ be its limit.

We claim that the limit $f_\infty$ can not belong to the orbit $G f_v$. By contradiction, if 
$(y_n f_v)_{n\in\N}$ were converging inside the orbit $G v$, then its inverse image under the bijective 
map
\begin{equation}\label{orbitmap}
G/F \rightarrow G f_v,~g F \longmapsto g f_v
\end{equation}
would be convergent in $G/F$, hence would be bounded in $G/F$, contradicting the failure of 
property~(B). For this argument to work, we have to know that the inverse map of~\eqref{orbitmap} is continuous. By~\cite[Corollary~2 of Lemma~3.2]{PR}, $Gf_{v}$ is open in its closure, 
and in particular it is locally compact.  Therefore as a consequence of the Baire's category theorem for 
locally compact second countable spaces, the orbit map~\eqref{orbitmap} is open, and hence a 
homeomorphism. 

This limit $f_\infty$ then belongs to $\overline{G f_v}\smallsetminus G f_v$, which we denote by 
$\partial(Gv)$.

Let $h$ be in $H$. Then by Remark~\ref{rem:AB}-(\ref{rem:A}), property~(A) holds also for $h \Omega$ 
instead 
of $\Omega$. The veracity of Property~(B) is clearly untouched by substituting $(y_n)_{n \in \N}$ 
with $(y_n h)_{n \in \N}$. Arguing with function $h f_v:\omega \mapsto h \omega v$ instead of 
$f_v$, we conclude the sequence $(y_n h f_v)_{n \in \N}$ has a limit, say $f_\infty^h$ in $W$, 
and that this limit belongs to $\overline{G h f_v}\smallsetminus G h f_v$, which equals 
$\overline{G f_v}\smallsetminus G f_v$.

Let $\Zar\partial(G f_v)=\Zcl(G f_v)\smallsetminus\Zcl(G)f_{v} $ denote the boundary of $Gf_{v}$ with 
respect to the Zariski topology; see Lemma~\ref{bordzariski} stated below. From the closed orbit 
lemma \cite[Proposition~2.23]{PR}, one knows that $S:=\Zar\partial(G f_v)$ is a Zariski closed 
($G$-invariant) subset in $W$. By  \cite[Lemma~1.1(b)]{Kempf}, over $k=\R$, there exists a finite 
dimensional linear $G$-space $W'$ and a $G$-equivariant polynomial map
$$ \Phi: W \rightarrow W^\prime$$ such that $\Phi^{-1}(0)=S$. Clearly, $f_v\not\in S$, therefore 
$\Phi(f_{v})\neq 0$. On the other hand, for each $h\in H$, in view of the above observation and by 
Lemma~\ref{bordzariski},  $f_\infty^{h}\in S$, and hence $\Phi(f_\infty^h)=0$.

Let us recall the situation. We have a sequence $(y_n)_{n\in\N}$ in $Y$ such that, for each $h$ 
in $H$, the sequence $(y_n h \Phi(f_v))_{n \in \N}$ converges to $0$ whereas $h \Phi(f_v)$ is 
never zero. We now consider a compact subset $\Omega^\prime$ of $H$ with nonempty interior. 
This can be found because $H$ is a connected Lie group. Then the sequence of (continuous) 
functions $(h \mapsto y_n h \Phi(f_v))_{n \in \N}$ is converging point-wise to $0$, hence, on 
$\Omega^\prime$, is uniformly converging to $0$ by the argument as in 
Remark~\ref{rem:AB}-(\ref{itm:A1}). 

In particular, the subsets $y_n \Omega^\prime \Phi(f_v)$ of $W^\prime$ are uniformly 
converging to $0$ as $n$ goes to $\infty$. But this contradicts Theorem~\ref{lemma} applied to
\begin{enumerate}\renewcommand{\labelenumi}{(\roman{enumi})}
\item the bounded subset $\Omega^\prime$ of $H$ with nonempty interior;
\item the representation of $G$ on $W^\prime$;
\item any norm on $W^\prime$.
\end{enumerate}
This theorem says $y_n \Omega^\prime \Phi(f_v)$ can not be bounded above, in the chosen 
norm, by $c \Nm{\Phi(f_v)}$.

This contradiction completes the proof of Theorem~\ref{theofocusing}, modulo the following 
lemma.
\end{proof}

\begin{lemma} \label{bordzariski} Consider a $\R$-vector space $V$, and let a $G$ be a 
connected real Lie subgroup of $\GL(V)$. pick any $v$ in $V$.
We denote
\begin{itemize}
\item by $\Zcl(G)$ the Zariski closure of $G$ in  $GL(V)$,
\item by $\Zcl(G v)$ the Zariski closure of the orbit $Gv$ in $V$,
\item by $\partial(Gv)=\overline{G v}\smallsetminus G v$, the boundary for the metric topology, 
and
\item by $\Zar\partial(Gv)=\Zcl(G v)\smallsetminus \Zcl(G) v$, the boundary for the Zariski 
topology.
\end{itemize}
Assume now that $G$ is open in $\Zcl(G)$, (which means that $G$ is a real algebraic Lie group; 
see~\cite[Theorem~3.6 and Corollary~1]{PR},) for instance if $G$ is semisimple.

Then $\Zar\partial(Gv)\cap \overline{G v}=\partial(Gv)$: one has 
$\partial(Gv)\subseteq\Zar\partial(Gv)$ and $\Zar\partial(Gv)\cap G.v=\emptyset$. 
\end{lemma}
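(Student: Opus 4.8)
The plan is to reduce both assertions of the lemma to a single geometric fact: \emph{under the hypothesis that $G$ is open in $Z(G)$, the orbit $Gv$ is closed in the orbit $Z(G)v$ for the real (metric) topology}. Apart from this, the only other ingredient will be that the Zariski closure of a subset of $V$ contains its metric closure. Observe first that the inclusion $\partial_Z(Gv)\cap Gv=\emptyset$ needs no hypothesis at all: since $G\subseteq Z(G)$ we have $Gv\subseteq Z(G)v$, and $Z(G)v$ is, by definition, disjoint from $\partial_Z(Gv)=Z(Gv)\smallsetminus Z(G)v$.

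To prove the geometric fact, I would first recall that the orbit $Z(G)v$ of the real algebraic group $Z(G)$ is locally closed in $V$, hence is an embedded submanifold whose topology as a subspace of $V$ coincides with its topology as a homogeneous space of $Z(G)$ (closed orbit lemma, \cite[Proposition~2.23]{PR}). Since $G$ is open in $Z(G)$, the Lie algebras of $G$ and $Z(G)$ coincide, and for every $w\in Z(G)v$ the stabiliser $\mathrm{Stab}_G(w)$ is open in $\mathrm{Stab}_{Z(G)}(w)$, so these two stabilisers share a Lie algebra as well. Hence the $G$-equivariant injection $G/\mathrm{Stab}_G(w)\to Z(G)v$, $\,g\,\mathrm{Stab}_G(w)\mapsto g\cdot w$, has bijective differential at the base point, hence everywhere by equivariance, and is therefore a local diffeomorphism onto an open subset; that is, every $G$-orbit $Gw$ is open in $Z(G)v$. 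Consequently $Z(G)v\smallsetminus Gv$, being a union of $G$-orbits, is open, so $Gv$ is closed in $Z(G)v$; in other words $\overline{Gv}\cap Z(G)v=Gv$, where closures are taken in $V$.

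With this in hand I would finish as follows. Because $Z(Gv)$ is Zariski closed and $G$-invariant, it is in fact $Z(G)$-invariant, the stabiliser of a Zariski closed subset being Zariski closed and containing the Zariski-dense subgroup $G$; therefore $Z(G)v\subseteq Z(Gv)$, whence $Z(Gv)=Z(Z(G)v)$ and $\partial_Z(Gv)$ is exactly the Zariski boundary of the orbit $Z(G)v$. Now take $x\in\partial(Gv)=\overline{Gv}\smallsetminus Gv$. Then $x\in\overline{Gv}\subseteq Z(Gv)$, while $x\notin Z(G)v$, since otherwise $x\in\overline{Gv}\cap Z(G)v=Gv$, contradicting $x\notin Gv$; hence $x\in Z(Gv)\smallsetminus Z(G)v=\partial_Z(Gv)$. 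This proves $\partial(Gv)\subseteq\partial_Z(Gv)$. Finally, decomposing $\overline{Gv}=Gv\sqcup\partial(Gv)$ and using the two inclusions just obtained,
\[
\partial_Z(Gv)\cap\overline{Gv}=\bigl(\partial_Z(Gv)\cap Gv\bigr)\cup\bigl(\partial_Z(Gv)\cap\partial(Gv)\bigr)=\emptyset\cup\partial(Gv)=\partial(Gv).
\]

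The one delicate point I anticipate is the topological bookkeeping in the second paragraph: one must be careful to use the subspace topology that $Z(G)v$ inherits from $V$, rather than the intrinsic topology of the homogeneous space $Z(G)/\mathrm{Stab}_{Z(G)}(v)$, and it is precisely for this that one invokes the \emph{local closedness} of the algebraic orbit $Z(G)v$, and not merely the fact that it is an immersed submanifold. Everything else is routine set-theoretic and Lie-theoretic manipulation.
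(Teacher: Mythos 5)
Your proof is correct and follows essentially the same route as the paper's: both rely on the closed orbit lemma to give local closedness of the algebraic orbit, and on the hypothesis that $G$ is open in $Z(G)$ to conclude that each $G$-orbit in $Z(G)v$ is open there, which is the crux. The only stylistic difference is that you package this as the clean statement \emph{``$Gv$ is closed in $Z(G)v$, hence $\overline{Gv}\cap Z(G)v = Gv$''} and derive openness of the $G$-orbits via a Lie-algebra/differential argument, whereas the paper argues pointwise at a boundary point $x$ (showing $Gx$ cannot be open in $\overline{Gv}$ since every neighborhood of $x$ meets the distinct orbit $Gv$) and cites \cite[Corollary~2 of Lemma~3.2]{PR} for the openness of the orbit map; these are equivalent formulations of the same underlying fact.
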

\begin{proof}
Firstly, as $G v$ is contained in $\Zcl(G v)$, which does not meet $\Zar\partial(Gv)$, one gets 
easily: $\Zar\partial(Gv)\cap Gv=\emptyset$. 

Since $G$ is open in $\Zcl(G)$, for any point $w\in \Zcl(G) v$ we have that $G w$ is open in 
$\Zcl(G) w$ (the orbit map $\Zcl(G)\to \Zcl(G) v$ is an open map, 
by~\cite[Corollary~2 of Lemma~3.2]{PR}). Using the 
Closed orbit lemma~(\cite[\S I.1.8]{BorelLAG}, or~\cite[Proposition~2.23]{PR}), 
we know that $\Zcl(G) v$ is Zariski open in $\Zcl(G v)$, hence 
open. It follows that $G w$, for $w$ in $\Zcl(G) v$, is also open in $\Zcl(G v)$. We also know that 
$\Zcl(G v)$ is Zariski closed, hence closed, and contains $G v$; it hence contains 
$\overline{G v}$. Therefore $G w\cap\overline{G v}$ is an open subset of $\overline{Gv}$. 

Consider now a point $x$ of $\partial(Gv)$. Then the $G$-orbit $G x$ of $x$ is distinct from 
$G v$. But $x$ belongs to $\overline{G v}$, and can be approached along $G v$. Hence any 
neighborhood of $x$ meets at least two $G$ orbits: $G v$ and $G x$. It follows that $G x$ is not 
open in $\overline{G v}$ at $x$, hence not open in $\overline{G v}$.

Therefore $x$ can not be of the form $w$, with $w\in\Zcl(G) v$. Now $\partial(Gv)$ is contained 
in $\overline{Gv}$, hence in $\Zcl(G v)$, while it does not meet $\Zcl(G) v$. in other words, 
$\partial(Gv)\subseteq\Zar\partial(Gv)$. 
\end{proof}

\begin{remark} \label{rem:boundary} The proof of Lemma~\ref{bordzariski} uses only that the 
metric topology is finer than the Zariski topology, and that the orbit map $\Zcl(G)\to \Zcl(G) v$ is 
an open map (\cite[Corollary~2 of Lemma~3.2]{PR}). The lemma and the proof then hold  for any 
non-discrete locally compact field $k$ of characteristic zero (hypotheses from \cite[Sec.~3.1]
{PR}), for any group $G$ which is open in $\Zcl(G)(k)$.
\end{remark}

\begin{remark} \label{rem:kLC} Our proof of Theorem~\ref{theofocusing} is essentially algebraic 
in nature. It can be transposed mutatis mutandis to other locally compact fields $k$, provided: 
(1)~That one has the analog of Lemma~\ref{bordzariski}, see Remark~\ref{rem:boundary}; (2)~one 
has the analog of Theorem~\ref{lemma} (as, for instance, in~\cite{These}).
\end{remark}


\def\cprime{$'$}

\end{document}